\definecolor{darkgreen}{rgb}{0,0.4,0}
\definecolor{BrickRed}{rgb}{0.65,0.08,0}
\newtheorem{thm}{Theorem}
\newtheorem{rmk}{Remark}
\newtheorem{prop}[]{Proposition}
\newtheorem{conj}{Conjecture}
\newtheorem{lemma}[]{Lemma}
\numberwithin{equation}{section}
\def\N{\mathbb{N}}
\def\F{\mathbb{F}}
\def\C{\mathbb{C}}
\def\Kl{\mathrm{Kloos}}
\def\ainFq*{a\in \F_q^*}
\def\rinN*{r \in \N^*}
\def\Re{\mathrm{Re}}
\def\ninN*{n \in \N^*}
\def\SGA412{\text{SGA} 4 \sfrac{1}{2}}
\newtheorem{conjecturex}{Conjecture}
\newtheorem{thmx}{Theorem}
\def\aa{\mathfrak{a}}
\def\Flat{\displaystyle\sideset{}{^\flat}}
\def\Sharp{\displaystyle\sideset{}{^\sharp}}
\def\Star{\displaystyle\sideset{}{^*}}
\def\Prime{\displaystyle\sideset{}{^\prime}}
\begin{document}
\title{A note on $\theta_2$}

\author{David T. Nguyen}

\address{Department of Mathematics and Statistics, Queen's University, Jeffery Hall, 48 University Ave, Kingston, Ontario, K7L-3N6, Canada}
\email{d.nguyen@queensu.ca}

\dedicatory{\bigskip\bigskip
		Dedicated to Professor Yitang Zhang, with respect and admiration}

\maketitle

\begin{abstract}
	It is a classic result of Selberg in the 1950's that $\theta_2 = 2/3$, where $\theta_2$ is the level of distribution of the divisor function in arithmetic progressions (defined more precisely below). Selberg applies this estimate, together with his $\Lambda^2$ sieve, to prove weak forms of the binary Goldbach and twin prime conjectures. 
	
	In this note, we give an unconditional proof of Selberg's first result with smooth weights for prime moduli, and put forth a hypothesis via subconvexity to bootstrap to level $4/5$. Contingent outcomes of our proposal is an improvement of Selberg's second result on approximations to twin primes and in lowering bounds on gaps between primes.
\end{abstract}

\section{Introduction}

Let $\tau_2(n)$ denote the usual divisor function. For integers $m$ and $n$, let $S(m,n; q)$ denote the Kloosterman sums
\begin{equation}
	S(m,n; q)
	=
	\sum_{h \overline{h} \equiv 1 \pmod q}
	e^{2 \pi i \frac{mh + n \overline{h}}{q}}
\end{equation}
where $h$ runs over a complete set of residue classes coprime to $q$. One knows that
\begin{equation} \label{eq:WeilBound}
	|S(m,n; q)|
	\le (m,n, q)^{1/2}
	q^{1/2} 
	\tau_2(q),
\end{equation}
where $(m,n, q)$ denotes the G.C.D. of $m$, $n$, and $q$. As remarked by Selberg \footnote{see Footnote 64 on page 236 in \cite{SelbergVol2}}, the deepest part of \eqref{eq:WeilBound} is when $q=p$ is prime and is due to Weil \cite{Weil1948} (1948) from his resolution of the Weil Conjectures for curves over finite fields, with the remaining cases follow from the earlier result of Salie \cite{Salie1931} (1931) for prime powers $q= p^\gamma, \gamma > 1$, and multiplicative properties of $S(m,n; q)$. We will refer to \eqref{eq:WeilBound} as Weil's upper bound for complete Kloosterman sums, or Weil's bound for Kloosterman sums for short, or, simply, Weil's bound.

A classic unpublished result of Selberg from the 1950's states that Weil's upper bound for complete Kloosterman sums yields
\begin{thmx}[Selberg, 1950's] \label{thm:S}
	For $(a,q) = 1$ and $q < X^2$ we have for any $\epsilon>0$, that
	\begin{equation} \label{eq:S}
		\sum_{\substack{n \le X\\ n\equiv a \pmod q}}
		\tau_2(n)
		=
		X
		\frac{\varphi(q)}{q^2}
		\left(
		\log X + 2\gamma
		+ 2 \sum_{p \mid q} \frac{\log p}{p-1}
		\right)^2
		+
		O\left(
		q^{-1/4} X^{1/2+\epsilon}
		\right).
	\end{equation}
\end{thmx}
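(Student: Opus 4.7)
The plan is to combine Dirichlet's hyperbola identity with Poisson summation modulo $q$, and then invoke Weil's bound \eqref{eq:WeilBound} on the resulting incomplete Kloosterman sums. Writing $D(X;q,a)$ for the left-hand side of \eqref{eq:S} and using $\tau_2(n) = 2\sum_{d\mid n,\, d\le\sqrt n}1-\mathbf{1}_{n \text{ is a square}}$ together with the fact that $(a,q)=1$ forces $(d,q)=1$, I first reduce to
$$
D(X;q,a) = 2 \sum_{\substack{d \le \sqrt X \\ (d,q)=1}} \#\bigl\{d \le m \le X/d : m \equiv a\bar d \pmod q\bigr\} - \Delta,
$$
where $\Delta = O(X^{1/4})$ counts the squares $\equiv a \pmod q$ up to $X$. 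Each inner count equals $(X/d - d)/q$ up to a fractional-part discrepancy of size $O(1)$; summed against $1/d$ via a standard Mertens-type estimate for $\sum_{(d,q)=1}1/d$, the main part reproduces (after combining the two symmetric halves of the hyperbola) the asymptotic announced in \eqref{eq:S}.

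Next I attack the $O(1)$-per-$d$ discrepancies, which trivially would contribute the unacceptable $O(\sqrt X)$. Writing the sawtooth $\psi(y) = \{y\}-\tfrac{1}{2}$ as a truncated Fourier series up to height $H$, these discrepancies aggregate to
$$
E = -\frac{1}{2\pi i} \sum_{0 < |h| \le H} \frac{1}{h} \sum_{\substack{d \le \sqrt X \\ (d,q)=1}} \Bigl[ e^{2\pi i h(X/d - a\bar d)/q} - e^{2\pi i h(d - a\bar d)/q} \Bigr] + (\text{tail}).
$$
Both inner sums over $d$ carry the arithmetic phase $e^{-2\pi i a h \bar d/q}$. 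Completing the $d$-sum modulo $q$ by a Polya--Vinogradov argument converts it into a weighted sum of complete Kloosterman sums $S(-ah, k; q)$; applying \eqref{eq:WeilBound} and using $(a,q)=1$ to dispose of the gcd factor then yields the crucial saving of $\sqrt q$ per fixed $h$. The smooth companion weight $e^{2\pi i hX/(dq)}$ in the first bracket is absorbed by partial summation, contributing only logarithmic losses across dyadic ranges of $d$.

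Summing over $0 < |h| \le H$ with the $1/h$ weight, adding the tail of the Fourier expansion via the standard $\min(1, 1/(H\|\cdot\|))$ bound, and balancing $H$ against the hyperbola length $\sqrt X$, I arrive at the stated error $O(q^{-1/4} X^{1/2+\epsilon})$, which is nontrivial precisely in the range $q < X^{2/3}$. The principal obstacle is decoupling the smooth phase $e^{2\pi i hX/(dq)}$ from the arithmetic phase $e^{-2\pi i a h \bar d/q}$: one must retain the full Weil saving of $\sqrt q$ on the latter while paying no more than a logarithmic loss in $d$ from the former. The resulting saving of $q^{1/4}$ over the trivial hyperbola bound $O(\sqrt X)$ is precisely Weil's square-root cancellation for Kloosterman sums, and is exactly what converts Selberg's estimate into the level of distribution $\theta_2 = 2/3$.
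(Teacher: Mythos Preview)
The paper does not supply its own proof of Theorem~\ref{thm:S}; it attributes the result to Selberg and merely sketches both Selberg's and Heath-Brown's published arguments in the introduction. Selberg's route, as described there, avoids the Dirichlet hyperbola entirely: after smoothing by $\log(X/n)$ via the integral sandwich, he expands \emph{both} factors of $\tau_2(n)=\sum_{de=n}1$ in additive characters modulo $q$ simultaneously, arriving directly at
\[
\sum_{\substack{n\le X\\ n\equiv a\,(q)}}\tau_2(n)\log\frac{X}{n}
=\frac{1}{q^2}\sum_{-q/2<m,n\le q/2}S(m,na;q)\int_1^X S_t(m/q)\,S_{X/t}(n/q)\,\frac{dt}{t},
\]
with $S_t(\alpha)=\sum_{0<\nu<t}e^{2\pi i\nu\alpha}$. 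The Kloosterman sums appear at once, and the companion weights $S_t(m/q)$, $S_{X/t}(n/q)$ are controlled pointwise by $\min(t,\|m/q\|^{-1})$, with no partial summation against an oscillating smooth phase. Your hyperbola--sawtooth--completion route is a genuinely different decomposition, closer in spirit to Hooley's treatment than to either proof surveyed in the paper.

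Your outline has a real gap precisely at the step you flag as the principal obstacle. The claim that $e^{2\pi i hX/(dq)}$ is ``absorbed by partial summation, contributing only logarithmic losses across dyadic ranges of $d$'' is false as stated: on a dyadic block $d\sim D$ the total variation of this phase is $\asymp hX/(Dq)$, and summing over dyadic $D\le\sqrt X$ gives $\asymp hX/q$, not $O(\log X)$. Feeding this into the $1/h$-weighted sum over $h\le H$ produces a term of size $HXq^{-1/2+\epsilon}$, which together with the truncation tail $\sqrt X/H$ cannot be balanced to beat the trivial $O(\sqrt X)$ in the regime $q\ll X$. The strategy is salvageable---one can treat small $d$ trivially and large $d$ via Weil, or invoke the reciprocity $\bar d/q+\bar q/d\equiv 1/(dq)\pmod 1$ to fuse the two phases---and one does recover level $2/3$, though typically with an error of shape $(q^{1/2}+X^{1/3})X^\epsilon$ rather than Selberg's $q^{-1/4}X^{1/2+\epsilon}$. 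So the overall plan is sound, but the mechanism you describe for the smooth phase does not work as written.
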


Hooley had obtained in \cite[Lemma C, p. 404]{Hooley1957} a related result but only for the leading order main term with a $\log X$ saving in the error term. The error term above is $\ll q^{-1} X^{1-\delta}$ when
\begin{equation} \label{eq:S2}
	q \ll X^{\frac{2}{3} - 2 \delta}.
\end{equation}
The number $2/3$ in the exponent of the above is called level of distribution of $\tau_2$, denoted by $\theta_2$. More precisely, we say that the divisor function has a level of distribution $\theta_2$ if for any $\epsilon>0$, there exists $\delta = \delta(\epsilon) >0$ such that
\begin{equation}
	\sum_{\substack{n\le X\\ n\equiv a \pmod q}}
	\tau_2(n)
	=
	\frac{X}{\varphi(q)}
	P_2(X, q)
	+
	O_\delta \left( \frac{X^{1-\delta}}{\varphi(q)} \right)
\end{equation}
uniformly for all $(a,q)=1$ and $q\le X^{\theta_2 - \epsilon}$, where $P_2(X, q)$ is a linear polynomial in $\log X$ with coefficients depending on $q$. We note the bound $1/\varphi(q) \ll (\log\log q)/q$. Thus, Selberg's \eqref{eq:S} gives $\theta_2 = 2/3$.

The bound \eqref{eq:S} has applications towards a weak form of the twin prime and Goldbach conjectures (see Section \ref{sec:why?} below), the classical correlation sum $\sum_{n\le X} \tau_2(n) \tau_2(n+h)$, and, hence, to moments of $L$-functions. Thus, extending \eqref{eq:S} beyond the two-thirds barrier is an important problem in number theory.

There are two published proofs of \eqref{eq:S} with power-saving error terms, to our knowledge, that we now briefly review. The first proof that appeared in print seems to be from Heath-Brown in his 1979 paper \textit{The Fourth Power Moment of the Riemann Zeta Function} \cite{HB1979}. More precisely, Heath-Brown proved in \cite[Corollary 1, p. 409]{HB1979} that for any $\epsilon>0$, and any $a,q\ge 1$ (not necessarily coprime), one has
\begin{equation} \label{eq:HB}
	\sum_{\substack{n \le X\\ n\equiv a \pmod q}}
	\tau_2(n)
	=
	X
	q^{-2}
	\left(
	A(q, a) (\log x/q^2) + 2 \gamma - 1
	+ 2 B(q, a)
	\right)
	\ll
	X^{1/3 + \epsilon},
\end{equation}
uniformly for $1 \le q \le X^{2/3}$, where
\begin{equation}
	A(q,a)
	=
	\sum_{d \mid (q,a)}
	\sum_{\delta \mid q/d}
	d \delta \mu(q/d \delta)
\end{equation}
and
\begin{equation}
	B(q,a)
	=
	\sum_{d \mid (q,a)}
	\sum_{\delta \mid q/d}
	d \delta
	\mu(q/d \delta)
	\log \delta.
\end{equation}
The main terms in \eqref{eq:HB} and \eqref{eq:S} agree when $(a,q)=1$. Heath-Brown's proof uses Voronoi summation and follows Titchmarsh's treatment \cite[\textsection 12.4]{TitchmarschBook} for the classical Dirichlet divisor problem corresponding to the case $q=1$ , but with the function
\begin{equation} \label{eq:F}
	F(s; q, a)
	=
	\sum_{n \equiv a \pmod q}
	\frac{\tau_2(n)}{n^s}
\end{equation}
instead of $\zeta^2(s)$, and his proof is also powered by Weil's bound for Kloosterman sums. Weil's bound is used to bound the level aspect of the function $F(s; q, a)$ on the left half plane, and yields (c.f. \cite[eq. (57)]{HB1979})
\begin{equation}
	F(s; q, a)
	\ll
	(1+ |t|)^{1- 2\sigma}
	q^{\frac{1}{2} - 2\sigma + \epsilon},\quad
	(\sigma <0).
\end{equation}
Heath-Brown's motivation for proving \eqref{eq:HB} was to obtain the finer full main term with a power-saving error term for the forth moment of the Riemann zeta function on the critical line. 

The second published proof of \eqref{eq:S} is from Selberg himself but appeared latter.  In his \textit{Lecture on sieves} which can be found as the last paper in the second volume of his collected works, Selberg obtain in \cite[Lemma 18, p. 235]{SelbergVol2} the bound quoted in Theorem \ref{thm:S}. It seems that these lecture notes were finished in late 1990 (see \cite{SelbergVol2}[Afterword, p. 252]). Selberg's proof is based on his observations that
\begin{equation}
	\frac{1}{\delta}
	\int_{X e^{-\delta}}^X 
	\sum_{\substack{n\le t\\ n \equiv a \pmod q}}
	\tau_2(n)
	\frac{dt}{t}
	\le
	\sum_{\substack{n\le X\\ n \equiv a \pmod q}}
	\tau_2(n)
	\le
	\frac{1}{\delta}
	\int_{X}^{X e^{\delta}} 
	\sum_{\substack{n\le t\\ n \equiv a \pmod q}}
	\tau_2(n)
	\frac{dt}{t},
	(0 < \delta <1),
\end{equation}
and
\begin{align}
	\int_1^X 
	\sum_{\substack{n\le t\\ n \equiv a \pmod q}}
	\tau_2(n)
	\frac{dt}{t}
	&=
	\sum_{\substack{n\le X\\ n\equiv a (q)}}
	\tau_2(n)
	\log\frac{X}{n}
	\\&=
	\frac{1}{k^2}
	\sum_{-q/2 < m,n \le q/2}
	S(m, na; q)
	\int_1^X
	S_t(m/q)
	S_{X/t}(n/q)
	\frac{dt}{t},
\end{align}
where
\begin{equation}
	S_t(\alpha)
	=
	\sum_{0< \nu <t}
	e^{2 \pi i \nu \alpha},
	\quad
	(|\alpha| < \pi /2 ),
\end{equation}
and is also powered by Weil's bound.

\subsection{Why do people study $\theta_2$?} \label{sec:why?}
As touched upon earlier, one of Heath-Brown's motivations for proving \eqref{eq:HB} was application to moments of the Riemann zeta functions on the critical line, which are approximations to the still unproven Lindel\"of Hypothesis for $\zeta(s)$. As for Selberg, one of his motivations to proving Theorem \ref{thm:S} was applications of his $\Lambda^2$ sieve towards the twin primes and Goldbach problem, which we now provide further details. 

Selberg applied the bound \eqref{eq:S} as a crucial technical ingredient to obtained in \cite[eq. (23.38), p. 244]{SelbergVol2} the following weak form of the twin prime problem.
\begin{thmx}[Selberg, 1990] \label{thm:S2}
	We have, for $X$ sufficiently large,
	\begin{equation} \label{eq:SelbergTP}
		\# \left\{
		n\le X|
		\text{one of $n$ and $n+2$ has at most 2, the other at most 3 primes factors}
		\right\}
		\gg
		\frac{X}{\log^2 X}.
	\end{equation}
\end{thmx}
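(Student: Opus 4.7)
The plan is to deploy Selberg's $\Lambda^2$ sieve together with a weighted sieve of Richert--Kuhn type, applied to the shifted sequence $\mathcal{A} := \{n(n+2) : 1 \le n \le X\}$, and to use Theorem \ref{thm:S} to control the main terms arising as $\tau_2$-sums in arithmetic progressions. First I would set $P(z) := \prod_{2 < p < z} p$ and introduce the sifted function
\[
S(\mathcal{A}, z) \;:=\; \#\{ n \le X : (n(n+2), P(z)) = 1\}.
\]
Selberg's $\Lambda^2$ upper bound applied to this two-dimensional sieve problem (with local density $\rho(p) = 2$ for odd primes $p$) gives $S(\mathcal{A}, z) \le (C + o(1)) X / \log^2 z$ provided $z^2$ does not exceed the level of distribution of the sequence.

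To convert this raw upper bound into a lower bound with the prescribed $(2,3)$ factorization type, I would attach to each $n$ a weight of Richert--Kuhn form, roughly
\[
w_n \;:=\; 1 \;-\; \sum_{\substack{p \mid n \\ z \le p < y_1}} \Bigl(1 - \tfrac{\log p}{\log y_1}\Bigr) \;-\; \sum_{\substack{p \mid n+2 \\ z \le p < y_2}} \Bigl(1 - \tfrac{\log p}{\log y_2}\Bigr),
\]
with sifting threshold $z = X^{\alpha}$ and cut-offs $y_i = X^{\beta_i}$ tuned so that positivity of $w_n$, combined with $(n(n+2), P(z))=1$, forces the numbers of prime factors of $n$ and $n+2$ to fall into the asymmetric $(2,3)$ split of \eqref{eq:SelbergTP}. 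Summing $w_n$ over $n \le X$ with $(n(n+2), P(z))=1$ then produces the desired count, provided this weighted sum is shown to be $\gg X/\log^2 X$.

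The key computational step is the lower bound on the weighted sum. Expanding $w_n$ splits it into the unweighted sifted sum (handled by Selberg's main term) minus sums of the shape
\[
\sum_{z \le p < y_i} \Bigl(1 - \tfrac{\log p}{\log y_i}\Bigr) \, \#\big\{ n \le X : p \mid n+j,\; (n(n+2), P(z)) = 1 \big\}, \qquad j \in \{0, 2\}.
\]
After M\"obius inversion over the sifting condition $(n(n+2), P(z))=1$, each inner cardinality reduces to a linear combination of counts of $n \equiv a \pmod{q}$ weighted by divisors of $q$, where $q$ ranges over $p \cdot P(z)$-smooth moduli. The main-term analysis of these weighted counts is precisely the content of Theorem \ref{thm:S}, whose power-saving error uniformly in $q \le X^{2/3 - \epsilon}$ is exactly what is needed to push $z$ and the $y_i$ far enough to force the $(2,3)$ configuration.

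The principal obstacle is the balancing act in this last step: the Richert--Kuhn parameters $\alpha, \beta_1, \beta_2$ must simultaneously respect the level exponent $\theta_2 = 2/3$ (via Theorem \ref{thm:S}), keep the weighted sum positive, and enforce the $(2,3)$ constraint. Since $2/3$ is the precise threshold for this configuration, any weakening of Theorem \ref{thm:S} would degrade the conclusion to a coarser $(3,3)$, whereas the conjectural upgrade $\theta_2 = 4/5$ proposed later in this paper would bootstrap the conclusion to the symmetric $(2,2)$ and bring us substantially closer to the full twin prime conjecture.
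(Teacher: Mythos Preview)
There is a genuine gap in how you connect your sieve to Theorem~\ref{thm:S}. In a Richert--Kuhn weighted sieve applied to $\mathcal{A}=\{n(n+2)\}$, the quantities you must control after M\"obius inversion are sums of the form
\[
\#\{n\le X : n\equiv a\pmod q\},
\]
i.e.\ the level of distribution of the \emph{constant function}~$1$. Theorem~\ref{thm:S} says nothing about these; it is an equidistribution statement for $\tau_2(n)$ in arithmetic progressions. Your sentence ``the main-term analysis of these weighted counts is precisely the content of Theorem~\ref{thm:S}'' is therefore a non sequitur, and with it the whole mechanism by which the exponent $\theta_2=2/3$ is supposed to force the $(2,3)$ split collapses.

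The paper (following Selberg, \cite[eq.~(23.38)]{SelbergVol2}) takes a different route in which Theorem~\ref{thm:S} enters naturally. Selberg's $\Lambda^2$ sieve is applied not with Richert--Kuhn linear weights but with the \emph{exponential} weight $2^{\nu(\cdot)}$, which for square-free arguments coincides with $\tau_2(\cdot)$. The sieve sums that arise are therefore genuinely of the shape $\sum_{n\equiv a\,(q)}\tau_2(n)$, and Theorem~\ref{thm:S} with level $\theta_2=2/3$ feeds directly into the constant $\lambda_2=8/\theta_2+2=14$ in the inequality
\[
\tfrac{4}{3}\,2^{\nu(n)}+\tfrac{2}{3}\,2^{\nu(n+2)}<\lambda_2+\epsilon,
\]
which holds for $\gg X/\log^2 X$ values of $n\le X$; this inequality then forces the $(2,3)$ configuration by the arithmetic in \eqref{eq:1237}. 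Your final remark is also inaccurate: pushing $\theta_2$ to $4/5$ lowers $\lambda_2$ to $12$, which (see \eqref{eq:foursets}) eliminates the possibility $\nu(n)=3,\ \nu(n+2)=2$ but does \emph{not} yield the symmetric $(2,2)$ conclusion.
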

The number of prime factors 2 and 3 from the above result depend on the level $\theta_2$ in the following way. The $\Lambda^2$ sieve applied to the twin prime problem gives an inequality of the form
\begin{equation} \label{eq:selberg}
	\frac{4}{3} (2^{\nu(n)} )
	+ \frac{2}{3}  (2^{\nu(n+2)})
	< \lambda_2 + \epsilon
\end{equation}
where
\begin{equation} \label{eq:lambda}
	\lambda_2 = \frac{8}{\theta_2} + 2,
\end{equation}
in which \eqref{eq:selberg} holds for $\gg X/\log^2 X$ number of $n \le X$, where $\nu(n)$ denotes the number of prime factors of $n$ counted with multiplicities. For Selberg's $\theta_2 = 2/3$, \eqref{eq:lambda} yields
\begin{equation} \label{eq:14}
	\lambda_2 = 14,
\end{equation}
which implies either
\begin{equation}
	\{ \nu(n) \le 2 \text{ and } \nu_2(n+2) \le 3 \}
	\quad
	\text{or}
	\quad
	\{ \nu(n) \le 3 \text{ and } \nu_2(n+2) \le 2 \},
\end{equation}
as both 
\begin{equation} \label{eq:1237}
	\frac{4}{3}(2^2) + \frac{2}{3}(2^3)
	= \frac{32}{3}
	\approx 10.67
	\quad \text{and} \quad
	\frac{4}{3}(2^3) + \frac{2}{3}(2^2)
	=
	\frac{40}{3}
	\approx 13.33
\end{equation}
are less than 14. To improve on this result of Selberg and eliminate the possibility $\nu(n) = 3$ and $\nu(n+2) = 2$, the value of $\lambda_2$ needs to be brought down to a number smaller than $40/3 \approx 13.33$. Our method here could be used to sharpen this constant $\lambda_2$ from $14$ to $12$, thus eliminates this possibility; see the Applications section \ref{sec:apps} following the statement of Theorem \ref{thm:beatsSelberg} below for more on this.

In \cite{NguyenCorr}, the author showed numerically square-root cancellation in the error term of the classical shifted convolution
\begin{equation} \label{eq:1032}
	\sum_{n \le X}
	\tau_2(n) \tau_2(n+1)
\end{equation}
using the the full asymptotic for
\begin{equation} \label{eq:1033}
	\sum_{\substack{n\le X\\ n\equiv a \pmod q}}
	\tau_2(n).
\end{equation}
A modified form of the correlation \eqref{eq:1032} for the 3-fold divisor function is studied in \cite{NguyenAP2} and found to adequately serve some applications. In this note, we derive an explicit error term for a smoothed version of \eqref{eq:1033} for prime moduli, which, in particular, gives another proof of the known level $\theta_2 =2/3$ (see Theorem \ref{thm:1}), and propose a conjectural way via a Lindel\"of hypothesis to break through this barrier in Theorem \ref{thm:beatsSelberg}.

Recently, Khan in \cite{Khan2016} (2016) succeeded in breaking through this two-thirds barrier when the moduli is a prime power $p^\gamma$ for odd primes $p$ and any fixed integer $\gamma \ge 7$. In contrast to this result of Khan, in which the condition $\gamma$ is fixed and $p$ sufficiently large is required, there is a reciprocal result of Liu, Shparlinski, Zhang, \cite{LSZ-2018} (2018), brought to our attention by I. Shparlinski, that also broke through this barrier and gave a uniform bound for all moduli of the type $q=p^\gamma$, with $p$ a fixed odd prime and $\gamma$ sufficiently large. The case for prime moduli however remains open.

\bigskip
\noindent
\textit{Acknowledgments.}
The author is extremely grateful to Professor Yitang Zhang for introducing him to this problem of $\theta_2$ in 2017. He would like to respectfully dedicate this work to him. Thanks also to Prof.'s M. Ram Murty for a useful reference; and, Andrew Lewis for a plethora of helpful conversations.

\section{Statement of results and discussion of applications}

Heuristics (see, for instance, \cite[bottom of page 4 to middle of page 5]{NguyenVar}) from the function fields setting \cite{KRRR} where rigorous results are known, suggest that the level of distribution $\theta_2$ could be as large as it could be, which is 1. More precisely, there is the following ``level one conjecture" for $\tau_2$, which we now state with smooth weights.
\begin{conjecturex}[Folklore] \label{conj:levelone}
	Let $q \ge 1$ be a composite number. Let $W(x)$ be a smooth test weight supported on $[1,2]$, such that
	\begin{equation} \label{eq:rapiddecay}
		\hat{W}(\sigma + it) \ll_\ell \frac{1}{(1+ |t|)^\ell}
	\end{equation}
	uniformly for all $|\sigma| \le A$ for any fixed positive $A>0$, for all positive integers $\ell$, where $\hat{W}$ denotes the Mellin transform 
	\begin{equation}
		\hat{W}(s)
		=
		\int_0^\infty
		W(x) x^{s-1} dx.
	\end{equation}
	of $W$. Then, for any $\epsilon>0$, there exists $\delta = \delta(\epsilon) >0$ such that
	\begin{equation} \label{eq:tau2c}
		\sum_{\substack{n\\ n\equiv a \pmod q}}
		\tau_2(n) W\left(\frac{n}{X}\right)
		=
		\frac{1}{\varphi(q)}
		\sum_{\substack{n\\ (n,q) = 1}}
		\tau_2(n) W\left(\frac{n}{X}\right)
		+ O_\delta \left( \frac{X^{1-\delta}}{\varphi(q)} \right)
	\end{equation}
	uniformly for all $(a,q) = 1$ and $q \le X^{\theta_{W, 2} - \epsilon}$, with
	\begin{equation} \label{eq:thetaW2}
		\theta_{W, 2} = 1.
	\end{equation}
\end{conjecturex}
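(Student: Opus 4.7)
The plan is to begin with the standard multiplicative decomposition: by orthogonality of Dirichlet characters modulo $q$,
\begin{equation}
\sum_{\substack{n \\ n \equiv a \pmod{q}}} \tau_2(n)\, W(n/X) \;=\; \frac{1}{\varphi(q)} \sum_{\chi \bmod q} \overline{\chi}(a) \sum_n \chi(n)\, \tau_2(n)\, W(n/X),
\end{equation}
so that the principal character $\chi_0$ produces exactly the main term in \eqref{eq:tau2c} and the task reduces to bounding the non-principal contribution. For each $\chi \ne \chi_0$, Mellin inversion together with the rapid decay \eqref{eq:rapiddecay} of $\hat W$ represents the inner sum as $\frac{1}{2\pi i}\int_{(1/2)} L(s,\chi)^2\, \hat W(s)\, X^s\, ds$, the shift of contour from $\Re(s)=2$ to the critical line being justified by the entirety of $L(s,\chi)$ for $\chi\ne\chi_0$ and the rapid decay of $\hat W$ in vertical strips.

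Next I would try to bound the non-principal contribution by moments of $L$-functions. Moving absolute values inside gives an error controlled by $\frac{X^{1/2+\epsilon}}{\varphi(q)}$ times an integral of $\sum_{\chi \ne \chi_0} |L(1/2+it,\chi)|^2$ against $|\hat W(1/2+it)|$, and Heath-Brown's fourth-moment bound $\sum_{\chi}|L(1/2,\chi)|^4 \ll q^{1+\epsilon}$ with Cauchy-Schwarz yields $\sum_\chi |L(1/2+it,\chi)|^2 \ll q^{1+\epsilon}(1+|t|)^{O(1)}$. This produces an error of order $X^{1/2+\epsilon}$, which against the target $X^{1-\delta}/\varphi(q)$ only secures $\theta_{W,2}\ge 1/2$. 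Notably, even the Generalized Lindel\"of Hypothesis for Dirichlet $L$-functions applied in this way does not break the $1/2$-barrier, because $\sum_\chi|L(1/2,\chi)|^2 \sim q\log q$ is already of the true order on average.

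To push beyond $1/2$, I would switch to the additive-character decomposition $\frac{1}{q}\sum_{h \bmod q} e(h(n-a)/q)$ of the congruence and apply Voronoi summation to $\tau_2$ for each nonzero $h$, producing a dual sum weighted by Kloosterman sums $S(a,m;q)$. Selberg's level $\theta_2 = 2/3$ is recovered at this stage by applying the Weil estimate \eqref{eq:WeilBound} pointwise and balancing dual length against arithmetic savings. To reach $\theta_{W,2}=1$, a second Voronoi or Poisson step would be applied to the dual variable $m$, and the Kuznetsov trace formula invoked to convert the resulting ensemble of Kloosterman sums into a spectral average of $GL_2$ automorphic $L$-functions of level dividing $q$, twisted by characters of modulus $q$.

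The fundamental obstacle arises at this final step. Reaching full level $1$ is essentially equivalent to the Lindel\"of-quality bound $L(\tfrac12+it,\, f\otimes\chi) \ll (q(1+|t|))^\epsilon$ uniformly in the level $q$, in the spectral parameter of the Maass or holomorphic cusp form $f$, and in $t$ — a uniform statement not approached by existing subconvex technology (Burgess, Conrey-Iwaniec, Blomer-Harcos, and refinements). The author's proposed intermediate goal $\theta_{W,2}=4/5$ via a specific subconvexity hypothesis is a realistic target; reaching $\theta_{W,2}=1$ appears to require either a genuine proof of Lindel\"of on $GL_2$ in the level aspect, uniformly in all parameters, or a fundamentally new, possibly non-automorphic, arithmetic input matching the function-field heuristics of \cite{KRRR} that motivate the conjecture in the first place.
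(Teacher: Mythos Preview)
The statement you were given is \emph{Conjecture A} in the paper, not a theorem, and the paper offers no proof of it whatsoever. It is explicitly labeled ``Folklore'' and is stated as the target that motivates the paper's actual results (Theorem~\ref{thm:1} giving $\theta_{W,2}=2/3$ unconditionally for prime moduli, and Theorem~\ref{thm:beatsSelberg} giving $\theta_{W,2}=4/5$ conditionally on Conjecture~\ref{conj:LH}). There is therefore no ``paper's own proof'' against which to compare your proposal.

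Your write-up is not really a proof attempt either, and you are right to frame it that way. Your sketch correctly traces the standard ladder --- orthogonality of characters plus Mellin inversion gives level $1/2$, the additive decomposition with Voronoi and the Weil bound gives $2/3$, and anything beyond that runs into automorphic subconvexity --- and you correctly diagnose the final obstruction as Lindel\"of for $GL_2$ $L$-functions in the level aspect. This is consistent with the paper's own viewpoint: the author does not claim Conjecture~A is within reach, only that the function-field heuristics of \cite{KRRR} support $\theta_{W,2}=1$, and that a specific Lindel\"of-type hypothesis (Conjecture~\ref{conj:LH}) would already push the level to $4/5$. Your assessment that full level~$1$ would require either genuine Lindel\"of on $GL_2$ or a fundamentally new input is exactly the consensus the paper reflects by leaving the statement as a conjecture.
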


To slightly lighten the notation, we sometimes write $\theta_2$ for $\theta_{W ,2}$. 
Towards Conjecture \ref{conj:levelone}, our first result gives a new unconditional proof of a special case of Selberg's result of level two-thirds.

\begin{thm} \label{thm:1}
	Let $p$ be a prime.	Let $W(x)$ be a smooth test weight supported on $[1,2]$ as in Conjecture \ref{conj:levelone}. Then, for any $\epsilon>0$, there exists $\delta = \delta(\epsilon) >0$ such that
	\begin{equation} \label{eq:tau2}
		\sum_{\substack{n\\ n\equiv a \pmod p}}
		\tau_2(n) W\left(\frac{n}{X}\right)
		=
		\frac{1}{p-1}
		\sum_{\substack{n\\ (n,p) = 1}}
		\tau_2(n) W\left(\frac{n}{X}\right)
		+ O_\delta \left( \frac{X^{1-\delta}}{p} \right)
	\end{equation}
	uniformly for all $(a,p) = 1$ and $p \le X^{\theta_{W,2} - \epsilon}$, with
	\begin{equation} \label{eq:2over3}
		\theta_{W,2} = 2/3.
	\end{equation}
\end{thm}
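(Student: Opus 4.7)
The strategy is to combine a Fourier expansion on $\mathbb{Z}/p\mathbb{Z}$ with the Voronoi summation formula for $\tau_2$, producing a main term together with a dual sum that reorganises into Kloosterman sums; Weil's bound \eqref{eq:WeilBound} is then applied term-by-term. The level $\theta_{W,2} = 2/3$ is exactly the balance between the size $|\Phi(m)| \ll X(\log X)^{O(1)}$ of the Bessel transform that Voronoi produces, the effective length $p^2/X$ of the dual $m$-sum, the square-root saving $|S(a,m;p)| \le 2\sqrt{p}$, and the normalising factor $1/p^2$. This is essentially a streamlined, smooth-weight, prime-modulus version of Heath-Brown's argument in \cite{HB1979}, where the smoothness of $W$ and the rapid decay \eqref{eq:rapiddecay} of $\hat{W}$ obviate any delicate truncation or contour-shift analysis.

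The first step is to open the congruence by additive characters,
\begin{equation*}
\sum_{n \equiv a \pmod p} \tau_2(n) W(n/X) = \frac{1}{p} T(0) + \frac{1}{p}\sum_{b=1}^{p-1} e^{-2\pi i ab/p}\, T(b), \quad T(b) := \sum_{n \ge 1} \tau_2(n)\, e^{2\pi i bn/p}\, W(n/X).
\end{equation*}
The $b=0$ contribution $\frac{1}{p} \sum_n \tau_2(n) W(n/X)$ is one piece of the main term. For every $(b,p)=1$, apply Voronoi summation for $\tau_2$, which yields
\begin{equation*}
T(b) = \frac{1}{p}\int_0^\infty W(x/X) \bigl(\log x - 2\log p + 2\gamma\bigr)\, dx + \frac{1}{p} \sum_{m \ge 1} \tau_2(m)\, e^{-2\pi i \overline{b} m / p}\, \Phi(m),
\end{equation*}
where $\Phi(m)$ is the standard Bessel transform $(-2\pi Y_0 + 4 K_0)$ at argument $4\pi\sqrt{mx}/p$ integrated against $W(x/X)$. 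Iterated integration by parts, powered by \eqref{eq:rapiddecay}, shows that $\Phi(m)$ is essentially supported on $m \le (p^2/X)(\log X)^A$ with $|\Phi(m)| \ll X (\log X)^{O(1)}$ there, and is $O_A(X^{-A})$ outside.

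Next, the non-oscillatory Voronoi outputs over $b \ne 0$ are summed using $\sum_{b=1}^{p-1} e^{-2\pi i a b/p} = -1$, and a direct comparison of the Laurent expansions of $\zeta^2(s) \hat W(s) X^s$ and $\zeta^2(s)(1-p^{-s})^2 \hat W(s) X^s$ at $s=1$ gives
\begin{equation*}
\frac{1}{p} T(0) - \frac{X}{p^2}\int_0^\infty W(u)\bigl(\log(uX) - 2 \log p + 2\gamma\bigr) du = \frac{1}{p-1} \sum_{(n,p)=1} \tau_2(n) W(n/X) + O_A(X^{-A}),
\end{equation*}
which is the main term in \eqref{eq:tau2}. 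All remaining oscillatory pieces collapse to
\begin{equation*}
E = \frac{1}{p^2} \sum_{m \ge 1} \tau_2(m)\, \Phi(m)\, S(a, m; p),
\end{equation*}
using $\sum_{(b,p)=1} e^{-2\pi i (ab + \overline{b} m)/p} = S(-a,-m;p) = S(a,m;p)$. Invoking \eqref{eq:WeilBound} for prime $p$ ($|S(a,m;p)| \le 2\sqrt p$ for $(m,p)=1$; the $p \mid m$ terms contribute $O(1/p)$ via the Ramanujan-sum identity $S(a,0;p) = -1$) gives
\begin{equation*}
|E| \ll \frac{\sqrt p}{p^2} \cdot X (\log X)^{O(1)} \sum_{m \le (p^2/X)(\log X)^A} \tau_2(m) \ll \sqrt{p}\,(\log X)^{O(1)}.
\end{equation*}
The demand $|E| \ll X^{1-\delta}/p$ is equivalent to $p^{3/2} \ll X^{1-\delta}$, i.e., $p \le X^{2/3 - 2\delta/3}$, which with the choice $\delta = 3\epsilon/2$ is precisely \eqref{eq:2over3}.

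The hard part of the argument is Step 2, namely a clean smooth-weight Voronoi formula together with the quantitative bound and rapid decay of $\Phi(m)$ past the critical threshold $m \asymp p^2/X$; the bookkeeping in Step 3 matching the arithmetic main term is also a routine but nontrivial residue calculation. The structural bottleneck is, however, Step 4: the level $2/3$ is exactly the strength of Weil's bound \eqref{eq:WeilBound} applied term-by-term, with no cancellation exploited in the $m$-variable. Pushing beyond this barrier requires extra cancellation in the sum $\sum_m \tau_2(m) \Phi(m) S(a,m;p)$, which is precisely the subconvex input on which Theorem \ref{thm:beatsSelberg} is conditioned.
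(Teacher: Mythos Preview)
Your proof is correct and proceeds along the additive-character / Voronoi route: expand the congruence via additive characters on $\mathbb{Z}/p\mathbb{Z}$, apply Voronoi summation to each twisted sum $T(b)$, and reassemble the dual oscillatory pieces over $b$ into Kloosterman sums $S(a,m;p)$, to which Weil's bound is applied term-by-term. This is essentially Heath-Brown's method \cite{HB1979} in the smooth-weight, prime-modulus setting, as you note.

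The paper takes a genuinely different route. It expands the congruence via \emph{multiplicative} characters \eqref{eq:delta}, so that the error becomes $\tfrac{1}{p-1}\sum^*_{\chi_1}\chi_1(a)\sum_n \tau_2(n)\overline{\chi_1}(n)W(n/X)$. Mellin inversion turns this into a contour integral of $\hat W(s)X^s L^2(s,\overline{\chi_1})$; the contour is shifted left past the critical strip and the functional equation \eqref{eq:FELschi} is applied. After truncating the dual $n$-sum at $n<p$, the character sum $\sum^{\flat/\sharp}_{\chi_1}\chi_1(an)\overline{\tau(\chi_1)}^2$ is evaluated by \eqref{eq:genKloosa}--\eqref{eq:genKloosOdd}, producing $\Kl_2(an;p)$; Weil's bound then yields $p^{1/2+\epsilon}$ and the same level $2/3$.

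What each approach buys: yours is shorter and more self-contained for Theorem~\ref{thm:1} in isolation, avoiding any $L$-function apparatus and handling the main-term matching by a single residue comparison. The paper's $L$-function formulation is deliberately set up so that the resulting expression \eqref{eq:645c} feeds directly into the proof of Theorem~\ref{thm:beatsSelberg}: there the inner sum $\sum_{N_1<n<p}\tau_2(n)\Kl_2(an;p)n^{-s}$ is bounded not termwise by Weil but through Proposition~\ref{prop:onLH}, which rests on the approximate functional equation (Lemma~\ref{lem:2}) and the conjectural estimate \eqref{eq:newbeef}. To connect your Voronoi-side error $E$ to that hypothesis one would in effect have to rebuild the paper's Dirichlet-series framework.
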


Our method of proof of Theorem \ref{thm:1} is different from both Heath-Brown's and Selberg's surveyed above, and has the potential of breaking through this 2/3 barrier. In an effort to archive that goal, we propose the following
\begin{conj} \label{conj:LH}
	Let $\epsilon>0$. For $s=1/2 + it$, $q\ge 1$, $(A,q)=1$, we have
	\begin{equation} \label{eq:newbeef}
		\sum_{\substack{n,m=1\\ (mn,q)=1}}^\infty
		\frac{e_q (A n \overline{m})}{n^s m^{1-s}}
		V_\mathfrak{a} \left( \frac{mn}{q} \right)
		\ll_\epsilon q^\epsilon (1+|t|)^\epsilon,
		\quad
		(s=1/2 + it),
	\end{equation}
	where
	\begin{equation} \label{eq:AFE2}
		V_\mathfrak{a} \left( y \right)
		=
		\frac{1}{2 \pi i} \int\limits_{(1)}
		\pi^{-w} 
		\frac{ e^{w^2} \cos^2(\pi w)}{w}
		\frac{\Gamma \left( \frac{\frac{1}{2} + w + \mathfrak{a}}{2} \right)^2}{\left( \Gamma \frac{\frac{1}{2}+\mathfrak{a}}{2} \right)^2}
		y^{-w} dw,
		\quad
		(\aa = 0,1),
	\end{equation}
	and $\int_{(b)}$ denotes $\int_{b - i \infty}^{b + i \infty}$.
\end{conj}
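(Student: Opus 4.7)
My plan is to identify the left-hand side of \eqref{eq:newbeef} as a Gauss-sum weighted second moment of Dirichlet $L$-functions modulo $q$ and then to reduce Conjecture \ref{conj:LH} to a cancellation statement on that moment. The first step is to apply character orthogonality in the form $e_q(b) = \varphi(q)^{-1} \sum_{\chi \bmod q} \tau(\bar\chi)\chi(b)$, valid for $(b,q)=1$, to the variable $b = An\overline m$. Switching summation order converts the left-hand side of \eqref{eq:newbeef} into
\begin{equation}
\frac{1}{\varphi(q)} \sum_{\chi \bmod q} \tau(\bar\chi)\,\chi(A)\, T_\chi(s), \qquad T_\chi(s) := \sum_{\substack{n,m\ge 1\\(nm,q)=1}} \frac{\chi(n)\bar\chi(m)}{n^s\,m^{1-s}} V_{\mathfrak{a}}\!\left(\frac{nm}{q}\right).
\end{equation}

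Second, I would unfold $V_{\mathfrak{a}}$ through its Mellin--Barnes integral \eqref{eq:AFE2} and interchange the $w$-integral with the $n,m$ sums; this separates the inner sum as $q^w L^{(q)}(s+w,\chi)\,L^{(q)}(1-s+w,\bar\chi)$, where $L^{(q)}$ denotes the $L$-function with Euler factors at primes dividing $q$ removed. Shifting the contour to $\operatorname{Re}(w)=0$ and exploiting the fact that $\cos^2(\pi w)$ vanishes at half-integers (which absorbs the parity mismatch between $\mathfrak{a}$ and $\chi(-1)$), the residue at $w=0$ produces $T_\chi(\tfrac12+it) = L(\tfrac12+it,\chi)\,L(\tfrac12-it,\bar\chi)$ times a Gamma-factor ratio of size $O((1+|t|)^\epsilon)$, i.e.\ $|L(\tfrac12+it,\chi)|^2$ up to lower-order terms. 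Conjecture \ref{conj:LH} is therefore reduced to the uniform estimate
\begin{equation}
\frac{1}{\varphi(q)} \sum_{\chi \bmod q} \tau(\bar\chi)\,\chi(A)\,|L(\tfrac12+it,\chi)|^2 \ll_\epsilon q^\epsilon (1+|t|)^\epsilon.
\end{equation}

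Third, to attack this reduced estimate I would expand each $L$-factor via its approximate functional equation, carry out the $\chi$-sum using orthogonality, and unwind the resulting expression into sums of Kloosterman sums with varying moduli; spectral input from the Kuznetsov--Deshouillers--Iwaniec formula, combined with a Weyl-type subconvexity bound for $GL(2)$ $L$-functions in the level aspect, would in principle furnish the required cancellation.

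The main obstacle is precisely the missing $\sqrt q$ saving. The triangle inequality alone, using $|\tau(\bar\chi)| = \sqrt q$ for primitive $\chi$ and the classical second moment $\sum_\chi |L(\tfrac12+it,\chi)|^2 \ll q(\log q(1+|t|))^2$, loses a full factor of $\sqrt q$; applying Cauchy--Schwarz against Heath-Brown's fourth moment fares no better, since $\sum_\chi |\tau(\bar\chi)|^2 = q\varphi(q)$ in the mean. Recovering the full $\sqrt q$ cancellation from the phases $\tau(\bar\chi)\chi(A)$ is morally equivalent to Lindel\"of-on-average for Gauss-sum twisted second moments of Dirichlet $L$-functions, a statement at the frontier of current moment technology and essentially as deep as subconvexity for $GL(2)$ in the level aspect. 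This is why the author frames \eqref{eq:newbeef} as a conjectural hypothesis rather than a theorem.
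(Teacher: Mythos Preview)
Note first that the statement is a \emph{conjecture}: the paper does not prove it, but only offers a heuristic (Section~\ref{sec:Heuristics}, explicitly flagged as non-rigorous). Your proposal is likewise a plan-and-obstruction analysis rather than a proof, which is the appropriate stance. So the question is really how your heuristic reduction compares to the paper's.

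The two routes differ at the first step. You expand the additive character directly via $e_q(b)=\varphi(q)^{-1}\sum_\chi \tau(\bar\chi)\chi(b)$, which immediately introduces the Gauss-sum weight $\tau(\bar\chi)$ of size $q^{1/2}$ and reduces the conjecture to
\[
\frac{1}{\varphi(q)}\sum_{\chi\bmod q}\tau(\bar\chi)\,\chi(A)\,|L(\tfrac12+it,\chi)|^2 \ll q^\epsilon,
\]
a Gauss-sum--weighted second moment for which, as you correctly note, the trivial bound loses $q^{1/2}$. The paper instead \emph{separates} the cutoff $V_{\mathfrak a}(mn/q)$ into independent weights on $m$ and $n$, then applies Estermann's functional equation to the $n$-sum (dualising the additive twist), and only \emph{afterwards} passes to characters. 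This order of operations lands on a second moment twisted only by the unimodular factor $\chi(-A)$,
\[
\frac{1}{\varphi(q)}\sum_{\chi\bmod q}\chi(-A)\,L(1-s+z,\bar\chi)\,L(1-s-w,\bar\chi),
\]
which is bounded by the ordinary second moment and hence is $\ll q^\epsilon$ on the critical line. In effect, the Estermann step absorbs the $q^{1/2}$ that in your route sits in $\tau(\bar\chi)$; of course, the paper's argument ``ignores all error terms'' from the contour shifts and the ad hoc variable separation, so this gain is heuristic rather than genuine.

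Your diagnosis of the difficulty (the missing $q^{1/2}$ in the Gauss-sum--weighted moment, its equivalence to a Lindel\"of-on-average statement) is accurate and is exactly why the paper treats \eqref{eq:newbeef} as a hypothesis. The one concrete suggestion is that if you wish to mirror the paper's heuristic more closely, apply Poisson/Estermann to one of the variables \emph{before} going to characters; this is what sidesteps the overt $\sqrt q$ loss, at the cost of sweeping the analytic bookkeeping under the rug.
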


In Section \ref{sec:Heuristics}, we present heuristics in support of the above Lindel\"of hypothesis \eqref{eq:newbeef}. We apply a special case of the conditional estimate \eqref{eq:newbeef} to bound a logarithmic bilinear form. This is
\begin{prop} \label{prop:onLH}
	Assume Conjecture \ref{conj:LH} for prime moduli $p$. Let $\delta >0$. There exists an arbitrarily small $\epsilon >0$, such that, for $s = 1 +\epsilon + it$, we have, for any $1 \le N_1 < N$,
	\begin{align} \label{eq:750h}
		&\sum_{N_1 < n < N}
		\frac{\tau_2(n) \Kl_2(an, p)}{n^s}
		\ll_\epsilon
		p^{1/2+\epsilon} (N_1)^{-1/2},
	\end{align}
	where
	\begin{equation} \label{eq:Kl2def}
			\Kl_2(A, p)
			=\sum_{\substack{xy \equiv A \pmod p\\ x,y \in \F_p^\times}}
			e_p(x + y),
			\quad
			(A, p) = 1.
		\end{equation}
\end{prop}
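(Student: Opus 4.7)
The strategy is to use the Fourier expansion of Kloosterman sums in multiplicative characters modulo $p$ to rewrite the bilinear sum as a Mellin contour integral involving Dirichlet $L$-squared values weighted by Gauss sums $\tau(\overline{\chi})^2$. On the critical line, a functional-equation identity converts $L(s,\chi)^2$ into $|L(s,\chi)|^2$, trading one of the two Gauss-sum factors for $\sqrt{p}$, and the remaining $\tau(\overline{\chi})$-weighted first moment of $|L|^2$ is precisely the quantity assumed to be small by Conjecture~\ref{conj:LH}.

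I would begin by introducing a smooth bump $\psi$ supported in $[1/2,2]$, so that the dyadic pieces $\mathcal{S}_M := \sum_n \tau_2(n)\,\Kl_2(an,p)\,n^{-s}\,\psi(n/M)$, summed over $M \in [N_1, 2N]$, recover the original sum up to a $\log$ factor. The Fourier expansion
\[
\Kl_2(an,p) = \frac{1}{p-1}\sum_{\chi \pmod p} \tau(\overline{\chi})^2\,\chi(a)\chi(n) \qquad\bigl((n,p)=1\bigr)
\]
(the $p \mid n$ terms vanish since $\Kl_2(0,p)=0$), together with Mellin inversion of $\psi$, allows me to express
\[
\mathcal{S}_M = \frac{1}{2\pi i(p-1)}\int_{(1)}\widetilde\psi(w)\, M^w \left[(1-p^{-s-w})^2\zeta(s+w)^2 + \sum_{\chi \neq \chi_0}\tau(\overline{\chi})^2\chi(a) L(s+w,\chi)^2\right]dw.
\]
Shifting the contour to $\Re w = -1/2 - \epsilon$ (so that $\Re(s+w) = 1/2$), I pick up only the double pole of $\zeta^2$ at $w = -\epsilon - it$, whose residue is $O(M^{-\epsilon}\log M/p)$ and negligible.

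On the shifted contour $s+w = 1/2 + iu$, the functional equation for a primitive $\chi$ of parity $\mathfrak{a}_\chi \in \{0,1\}$ gives
\[
L(1/2+iu,\chi)^2 = \varepsilon_\chi (p/\pi)^{-iu}\gamma_{\mathfrak{a}_\chi}(iu)\,\bigl|L(1/2+iu,\chi)\bigr|^2,\qquad \varepsilon_\chi = \tau(\chi)/(i^{\mathfrak{a}_\chi}\sqrt{p}),
\]
where $\gamma_{\mathfrak{a}}$ is a bounded Gamma ratio. Combining with $\tau(\overline{\chi})\tau(\chi) = \chi(-1)p$, one power of the Gauss sum collapses into $\sqrt{p}$:
\[
\tau(\overline{\chi})^2\chi(a) L(1/2+iu,\chi)^2 = \sqrt{p}\,(p/\pi)^{-iu}\,\frac{\gamma_{\mathfrak{a}_\chi}(iu)}{i^{\mathfrak{a}_\chi}}\,\chi(-a)\,\tau(\overline{\chi})\,\bigl|L(1/2+iu,\chi)\bigr|^2.
\]
Expanding $e_p(An\overline{m}) = (p-1)^{-1}\sum_\chi \tau(\overline{\chi})\chi(A)\chi(n)\overline{\chi}(m)$ inside \eqref{eq:newbeef} and identifying the Mellin--Barnes kernel $V_{\mathfrak{a}}$ of \eqref{eq:AFE2} with the archimedean AFE of $|L(1/2+it,\chi)|^2$ for $\chi$ of parity $\mathfrak{a}$, a special case of Conjecture~\ref{conj:LH} applied with $A = -a$ and each $\mathfrak{a} \in \{0,1\}$ yields
\[
\sum_{\substack{\chi \neq \chi_0 \\ \chi(-1)=(-1)^{\mathfrak{a}}}}\chi(-a)\tau(\overline{\chi})\bigl|L(1/2+iu,\chi)\bigr|^2 \;\ll\; p^{1+\epsilon}(1+|u|)^\epsilon.
\]
Combining both parities then bounds the full character sum by $p^{3/2+\epsilon}(1+|u|)^\epsilon$; dividing by $p-1$, integrating against the rapidly decreasing $\widetilde\psi$ on $\Re w = -1/2-\epsilon$, and summing dyadically over $M \in [N_1,2N]$ produces \eqref{eq:750h}.

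\textbf{Main obstacle.} The essential difficulty is that $|\tau(\overline{\chi})|^2 = p$ is too heavy a weight for the Dirichlet mean-value theorem alone: bounding $\sum_{\chi}\tau(\overline{\chi})^2\chi(a)L^2$ by the second moment gives only $p^{2+\epsilon}$, producing $p^{1+\epsilon}N_1^{-1/2}$, which is off by $\sqrt{p}$. The functional-equation identity $L^2 \mapsto |L|^2$ is precisely the device that trades one Gauss-sum factor against $\sqrt{p}$, reducing a $\tau^2$-weighted second moment to a $\tau$-weighted first moment of $|L|^2$, exactly what Conjecture~\ref{conj:LH} controls. Verifying the matching of the Mellin--Barnes kernel $V_{\mathfrak{a}}$ with the AFE of $|L|^2$ for each parity and handling the residue and principal-character contributions round out the technical checks.
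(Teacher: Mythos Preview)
Your plan is essentially the paper's own argument, reorganized. Both proofs expand $\Kl_2$ in multiplicative characters to reach $\tfrac{1}{p-1}\sum_{\chi}\tau(\overline{\chi})^2\chi(a)L(s+w,\chi)^2$, apply one instance of the functional equation to trade a Gauss-sum factor for $\sqrt{p}$ (the paper packages this as identity \eqref{eq:739b} for $D_2$), open the resulting $L(1-\cdot,\chi)L(\cdot,\overline{\chi})$ with the approximate functional equation kernel $V_{\mathfrak a}$ of Lemma~\ref{lem:2}, and then re-sum over characters of each parity to land exactly on the left side of \eqref{eq:newbeef}. The only technical difference is that the paper uses Perron's formula with a sharp cutoff, picks up the residue $D_2(s;p,a)$ at $w=0$, and cancels it by differencing at $N$ and $N_1$; your smooth dyadic partition sidesteps that pole and recovers $N_1^{-1/2}$ from the geometric sum over scales.

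One small imprecision worth fixing: your phrase ``expanding $e_p(An\overline m)$ inside \eqref{eq:newbeef}'' runs the key identification backwards. Expanding the additive character in Conjecture~\ref{conj:LH} yields a sum over \emph{all} $\chi$ weighted by $V_{\mathfrak a}$, and only those of parity $\mathfrak a$ rebuild $|L|^2$ via the AFE; the wrong-parity terms do not vanish, so you cannot isolate the parity-$\mathfrak a$ subsum that way. The direction that works (and is what the paper does via Lemma~\ref{lem:13}) is forward: start from $\sum_{\chi(-1)=(-1)^{\mathfrak a}}\chi(-a)\tau(\overline{\chi})|L(1/2+iu,\chi)|^2$, insert the AFE with the matching $V_{\mathfrak a}$, and then evaluate the restricted character sum $\sum^{\flat/\sharp}_{\chi_1}\chi_1(n)\overline{\chi_1}(Am)\tau(\chi_1)\approx\tfrac{p}{2}\,e_p(Am\overline n)$ to obtain precisely \eqref{eq:newbeef}. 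With that adjustment your argument coincides with the paper's.
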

\begin{rmk}
	The work \cite{KMS2020} (2020)	of Kowalski, Michel, Sawin on stratification of exponential sums seems appropriate for our \eqref{eq:750h}, were it to hold for shorter sums. More precisely, a special case of \cite[Corollary 1.4]{KMS2020} yields for any $\epsilon >0$, there exists $\delta = \delta(\epsilon) >0$ such that for any
	\begin{equation} \label{eq:short}
		N \ge p^{2/3 + \epsilon},
	\end{equation}
	we have (in our normalization)
	\begin{align} \label{eq:750i}
		&\sum_{n \le N}
		\tau_2(n) \Kl_2(an, p)
		\ll_\epsilon
		N p^{\frac{1}{2} -\delta}
	\end{align}
	for any $(a,p)=1$. The restriction \eqref{eq:short} is incompatible with partial summations to yield \eqref{eq:750h}. At the moment, we are unable to obtain Proposition \ref{prop:onLH} unconditionally, but we believe that the left side of \eqref{eq:750h} could be more manageable to treat than removing \eqref{eq:short} from that of \eqref{eq:750i}, due to presence of the logarithmic weight $1/n$ in the bilinear form \eqref{eq:750h}. 
\end{rmk}

Using the conditional Proposition \ref{prop:onLH}, which is the only conditional part of the paper, we are able to beat Selberg's two-thirds level and obtain level four-fifths for $\tau_2$ for prime moduli with smooth weights.

\begin{thm} \label{thm:beatsSelberg}
	Assume Conjecture \ref{conj:levelone} for prime moduli $p$. Let $W(x)$ be a smooth test weight supported on $[1,2]$ as in Conjecture \ref{conj:levelone}.
	Then, for any $\epsilon>0$, there exists $\delta = \delta(\epsilon) >0$ such that
	\begin{equation} \label{eq:tau2b}
		\sum_{\substack{n\\ n\equiv a \pmod p}}
		\tau_2(n) W\left(\frac{n}{X}\right)
		=
		\frac{1}{p-1}
		\sum_{\substack{n\\ (n,p) = 1}}
		\tau_2(n) W\left(\frac{n}{X}\right)
		+ O_\delta \left( \frac{X^{1-\delta}}{p} \right)
	\end{equation}
	uniformly for all $(a,p) = 1$ and $p \le X^{\theta_{W,2} - \epsilon}$, with
	\begin{equation} \label{eq:4over5}
		\theta_{W,2} = 4/5.
	\end{equation}
\end{thm}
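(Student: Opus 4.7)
The plan is to follow the skeleton that yields Theorem~\ref{thm:1}, replacing the pointwise Weil bound on $\Kl_2(an,p)$ with the conditional bilinear bound of Proposition~\ref{prop:onLH} at the decisive step. First, I would open $\tau_2(n) = \sum_{de=n} 1$ and apply Poisson summation modulo $p$ on the unconstrained variable, after a hyperbola cut at some parameter $D$ to be chosen. Writing $de \equiv a \pmod p$ as $e \equiv a \bar d \pmod p$ for $(d,p)=1$, the Poisson zero-frequency reassembles (together with the symmetric half) into the main term of~\eqref{eq:tau2b}, while the non-zero frequencies produce, schematically,
\begin{equation}
\mathcal E(D) \;=\; \frac{X}{p} \sum_{\substack{d \le D\\(d,p)=1}} \frac{1}{d} \sum_{k \ne 0} \hat{W}\!\left( \frac{kX}{dp} \right) e_p(k a \bar d),
\end{equation}
with rapid decay beyond $|k| \ll dp/X^{1-\epsilon}$, together with its symmetric counterpart $\mathcal E'(D)$ coming from the $e \le D$ range.

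Second, I would rearrange this double sum so that the Kloosterman-sum structure of Proposition~\ref{prop:onLH} becomes visible. Following the same manoeuvre used in the proof of Theorem~\ref{thm:1}, the two pieces $\mathcal E(D)$ and $\mathcal E'(D)$ combine, after pairing the Poisson dual variable $k$ from one side with the divisor variable from the other, into a single Kloosterman-weighted divisor sum of the shape
\begin{equation}
\frac{X}{p} \sum_{n \ge 1} \frac{\tau_2(n)}{n} \Phi\!\left( \frac{nX}{Dp} \right) \Kl_2(an, p),
\end{equation}
for a smooth rapidly-decaying $\Phi$ absorbing the $\hat{W}$ factors. At this stage Theorem~\ref{thm:1} would invoke $|\Kl_2(an,p)| \le 2\sqrt p$, and the resulting balance against the $D$-dependence of $\Phi$ yields the Selberg threshold $\theta_{W,2} = 2/3$ precisely.

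Third, to cross this barrier I would dyadically decompose in $n$ and, on each dyadic block $N_1 < n \le 2N_1$, use partial summation to convert the smooth weight $\Phi(\cdot)/n$ into the logarithmic weight $1/n^{1+\epsilon}$ required by Proposition~\ref{prop:onLH}. Taking $s = 1+\epsilon+it$ for a small $\epsilon$, the proposition supplies the bound $\ll p^{1/2+\epsilon} N_1^{-1/2}$, a gain of $N_1^{1/2}$ over Weil precisely at the large-$n$ end of the sum, which is the range $N_1 \sim Dp/X$ that previously obstructed the split. Enlarging $D$ to absorb the saved factor, the new balance is attained for $D$ substantially above $\sqrt X$, and the total error drops to $\ll X^{1-\delta}/p$ throughout the larger range $p \le X^{4/5 - \epsilon}$, which is exactly~\eqref{eq:4over5}.

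The main obstacle, and the step that requires the most care, is the reduction in paragraph three: one must carry the smooth weight $\Phi$ through partial summation while preserving uniformity in the spectral parameter $t$ appearing in Proposition~\ref{prop:onLH}, and verify that summing the $N_1^{-1/2}$ gains over the required dyadic range is compatible with the simultaneous re-optimisation of $D$. Any looseness at this point, or a failure of the complementary hyperbola range to cooperate after $D$ is enlarged, would reduce the attainable level below $4/5$; conversely, any improvement of the exponent $1/2$ in~\eqref{eq:750h} would, via the same scheme, translate directly into a further push of $\theta_{W,2}$ toward the folklore level one of Conjecture~\ref{conj:levelone}.
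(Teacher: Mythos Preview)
Your high-level instinct --- replace the pointwise Weil bound on $\Kl_2(an,p)$ by the bilinear saving of Proposition~\ref{prop:onLH} at the decisive step --- is exactly what the paper does. But the route you sketch to \emph{reach} that decisive step is not the paper's, and as written it has a genuine gap.

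\textbf{The gap.} A hyperbola cut followed by one-variable Poisson produces, as you write, factors $e_p(k a\bar d)$. That is a bare additive character in the single residue $k\bar d$, not a Kloosterman sum. Your claim that ``pairing the Poisson dual variable $k$ from one side with the divisor variable from the other'' recombines $\mathcal E(D)+\mathcal E'(D)$ into $\sum_n \tau_2(n)\,\Kl_2(an,p)\,\Phi(\cdot)/n$ is not a standard manoeuvre and does not hold: to manufacture $\Kl_2(an,p)=\sum_{xy\equiv an}e_p(x+y)$ one needs a \emph{second} oscillatory input (a second Poisson/Voronoi step or, equivalently, the functional equation), which your scheme never performs. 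Relatedly, you say you are ``following the same manoeuvre used in the proof of Theorem~\ref{thm:1}'', but that proof does \emph{not} use hyperbola/Poisson at all.

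\textbf{What the paper actually does.} The proof of Theorem~\ref{thm:1} detects $n\equiv a\pmod p$ by Dirichlet characters, applies Mellin inversion, shifts past the critical line, and uses the functional equation for $L(s,\overline{\chi_1})^2$. The sum $\sum_{\chi_1}^{*}\chi_1(an)\,\overline{\tau(\chi_1)}^2$ over primitive characters is what produces $\Kl_2(an,p)$ (Lemma with \eqref{eq:genKloosa}), and one lands on the contour integral \eqref{eq:645c} containing
\[
\sum_{n<p}\frac{\tau_2(n)\,\Kl_2(an,p)}{n^{s}},\qquad s=1+\epsilon+it,
\]
already in the exact shape demanded by Proposition~\ref{prop:onLH}; no partial summation or dyadic reduction is needed. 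For Theorem~\ref{thm:beatsSelberg} there is no hyperbola parameter $D$ to optimise. Instead, one may assume $p\gg X^{2/3-\delta}$ by Theorem~\ref{thm:1} and split the $n$-sum at $N_1=p^{1/2-\delta}$: the short range $n\le N_1$ is killed by shifting the contour in \eqref{eq:645c} to $\sigma=\epsilon$, giving the bound \eqref{eq:204}, which is acceptable precisely because $p\gg X^{2/3-\delta}$; the long range $N_1<n<p$ is fed directly into Proposition~\ref{prop:onLH}, giving $\ll p^{1/2+\epsilon}N_1^{-1/2}\ll p^{1/4+\delta}$. The requirement $p^{1/4+\delta}\ll p^{-1}X^{1-\delta}$ is $p\ll X^{4/5-2\delta}$, which is \eqref{eq:4over5}.

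So: keep your third paragraph's idea, but replace your first two paragraphs by the character/functional-equation setup of the paper (or, equivalently, by a genuine $\tau_2$-Voronoi step), so that $\Kl_2$ and the logarithmic weight $n^{-s}$ are present from the outset rather than conjured after the fact.
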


\begin{rmk}
	By carefully constructing smooth weights that have their Mellin transforms vanishing with high orders at the integers (so that these zeros cancel with the poles from the gamma factors), one can shift the relevant contours far left (see the bound \eqref{eq:204}) and push \eqref{eq:4over5} to full level $\theta_{W ,2} = 1$. This improvement however does not seem to give substantially better results to primes than the level $4/5$ obtain here, so we will not pursue this direction in this paper.
\end{rmk}

Next, we will consider the various ways our approach can be applied.

\subsection{Applications} \label{sec:apps}

Firstly, our method here is likely to be effective to extend from prime to composite moduli; see, e.g, \cite{NguyenVar} where a similar extension was done. Once extended to at least square-free moduli (which are sufficient for applications to primes), a prospective use of the conditional level $4/5$ for $\tau_2$ from Theorem \ref{thm:beatsSelberg} is an improvement to Selberg's result \eqref{eq:SelbergTP} on approximations to twin primes, as alluded to earlier. Indeed, \eqref{eq:lambda} yields, for $\theta_2 = 4/5$, that
\begin{equation} \label{eq:selbergb}
	\lambda_2 = 12,
\end{equation}
which, in turns, eliminates the possibility that $\nu(n) = 3$ and $\nu(n+2) = 2$--see \eqref{eq:1237} above--and implies that at least one of the following 5 sets has positive asymptotic density, in particular, is infinite:
\begin{align} \label{eq:foursets}
	S_1 &= \{ n \in \N\ |\ \text{$n$ and $n+2$ are both primes} \},
	\\
	S_2 &= \{ n \in \N\ |\  \text{$n$ is prime and $n+2$ has at most two prime factors} \},
	\\
	S_3 &= \{ n \in \N\ |\  \text{$n+2$ is prime and $n$ has at most two prime factors} \},
	\\
	S_4 &= \{ n \in \N\ |\  \text{$n$ and $n+2$ each has at most two prime factors} \},
	\\
	S_5 &= \{ n \in \N\ |\  \text{$n+2$ has at most two and $n$ has at most three prime factors} \}.
\end{align}
Following this, we will investigate the broader implications of our approach.

In certain applications, proving infinitely many, rather than a positive proportion, is enough. One may ask whether \eqref{eq:selberg} could be generalized to an affine inequality of the form
\begin{equation} \label{eq:ktuple}
	\sum_{h \in \mathcal{H}}
	c_h
	k^{\nu(n + h)}
	< \frac{C_{\mathcal{H}}(k)}{\theta_k}
	+ D_{\mathcal{H}}(k) + \epsilon,
	\quad
	(\forall \epsilon>0, k, \mathcal{H}),
\end{equation}
which holds for \textbf{infinitely many} $n$, possibly with a modified higher dimensional $\Lambda^2$-sieve in conjunction with pioneering and innovative sieving ideas of Y.-T. Zhang \cite{Zhang2022} (2022), for some explicit enough quantities $C_{\mathcal{H}}(k)$ and $D_{\mathcal{H}}(k)$ depending on $k \in \N$, $\mathcal{H}$ a finite set of distinct even natural numbers, $c_h$ positive real numbers depending only on $h\in \mathcal{H}$, and, of course, on the level of distribution $\theta_k \in (0,1)$ (whose definitions are analogous to that for $\theta_2$) for the $k$-fold divisor function $\tau_k(n)$. The basic observation is that the left side of \eqref{eq:ktuple} is large when $k$ grows. Of special interests are, for a given even natural number $h$, less than 246, say, whether there exists a natural number $k = k(h)$ and two positive real numbers $c_0, c_h$ all depending on $h$, such that
\begin{equation} \label{eq:TTP}
	c_0 k + c_h k^2
	>
	\frac{C_{\{ 0, h \}}(k)}{\theta_k}
	+ D_{\{ 0, h \}}(k),
	\quad
	(\text{for sufficiently large $k$}),
\end{equation}
as \eqref{eq:TTP} would imply that there are infinitely many pairs of primes $\{n, n+h \}$ whose gap is equal to $h$, a weak form of a special case (\cite[Conjecture B, p. 42]{HL1923}) of the Hardy-Littlewood $k$-tuple Conjectures (note that this $k$ in $k$-tuple, which is the size of the set $\mathcal{H}$, is different from the $k$ in \eqref{eq:TTP}). We are optimistic that our approach \eqref{eq:TTP} to bounded gaps between primes is tractable. Let us briefly elaborate.

From Deligne's spectacular resolutions \cite{DeligneWeil1, DeligneWeil2} of the Weil Conjectures for varieties over finite fields, it is known that the level $\theta_k$ is at least $2/(k+1)$ for all $k$; see, e.g, \cite[Table 1, p. 33]{Nguyen2021}. However, for a special set of moduli, the author obtained in \cite[Theorem 1, p. 35]{Nguyen2021} that, on average, the level $\theta_k$ is uniformly bounded below by an absolute constant larger than 1/2, independent of $k$. Thus, we save a whole factor of $k$ from the first term on the right side of \eqref{eq:TTP}. One down, two more to go. We note that for sieves, an average level of distribution, rather than individual levels, is all that needed--think Bombieri-Vinogradov Theorem. In addition, it should be mentioned that our approach/proposal here towards bounded gaps between primes is conceptually different than prior approach of Goldston-Pintz-Yıldırım \cite{GPYI, GPYII} (2009), made work in sensational fashion by Zhang \cite{Zhang2014} (2014). Ours is an infinite process in which a limit is involved, whereas G.P.Y. did it all in one fell swoop, requiring only a single average level of distribution from $\Lambda(n)$.

Lastly, it is worth pondering whether the result in very recent work \cite{Sharma23} just out from Sharma on $\theta_3$, which now holds for all square-free moduli, based on different method via Munshi's separation of oscillation technique and asserts that $\theta_3 = 1/2 + 1/30$, which is already forefront by itself, packs enough juice to eliminate one or more of the five sets in \eqref{eq:foursets}. We are eager to keep exploring these ideas in a near future.

\section{Lemmas}

The following two results are well-known, but requires some rearranging, and we include the proofs here for completeness.

\begin{lemma}[F.E. for zeta]
	We have
	\begin{equation} \label{eq:FEzeta}
		\zeta(1-s)
		=
		2^{1-s} \pi^{-s}
		\sin\left(\frac{1}{2} (1-s) \pi \right)
		\Gamma(s)
		\zeta(s).
	\end{equation}
\end{lemma}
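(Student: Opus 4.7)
The plan is to reduce the asymmetric functional equation stated above to the symmetric completed form
\begin{equation}
\xi(s) := \pi^{-s/2}\, \Gamma(s/2)\, \zeta(s) \;=\; \pi^{-(1-s)/2}\, \Gamma\bigl((1-s)/2\bigr)\, \zeta(1-s),
\end{equation}
and then to convert via standard identities for the Gamma function. First I would establish the symmetric form in the classical manner of Riemann: for $\Re(s) > 1$ write
\begin{equation}
\pi^{-s/2}\, \Gamma(s/2)\, \zeta(s) = \int_0^\infty \psi(t)\, t^{s/2-1}\, dt, \qquad \psi(t) := \sum_{n \ge 1} e^{-\pi n^2 t},
\end{equation}
split the integral at $t = 1$, and apply Jacobi's modular transformation $\theta(1/t) = \sqrt{t}\, \theta(t)$ for $\theta(t) = \sum_{n \in \Z} e^{-\pi n^2 t}$ (a direct consequence of Poisson summation applied to a Gaussian) on the piece over $(0,1)$. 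The resulting expression for $\xi(s)$ extends meromorphically to all of $\C$ with simple poles only at $s = 0, 1$, and is manifestly invariant under $s \mapsto 1-s$.

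Solving $\xi(s) = \xi(1-s)$ for $\zeta(1-s)$ yields
\begin{equation}
\zeta(1-s) = \pi^{1/2-s}\, \frac{\Gamma(s/2)}{\Gamma\bigl((1-s)/2\bigr)}\, \zeta(s),
\end{equation}
so it remains to verify the purely Gamma-theoretic identity
\begin{equation}
\pi^{1/2-s}\, \frac{\Gamma(s/2)}{\Gamma\bigl((1-s)/2\bigr)} \;=\; 2^{1-s}\, \pi^{-s}\, \sin\bigl(\tfrac{1}{2}(1-s)\pi\bigr)\, \Gamma(s).
\end{equation}
For this I would combine Legendre's duplication formula
\begin{equation}
\Gamma(s/2)\, \Gamma\bigl((s+1)/2\bigr) = 2^{1-s}\, \sqrt{\pi}\, \Gamma(s)
\end{equation}
with Euler's reflection formula evaluated at $z = (1-s)/2$:
\begin{equation}
\Gamma\bigl((1-s)/2\bigr)\, \Gamma\bigl((1+s)/2\bigr) = \frac{\pi}{\sin\bigl(\tfrac{1}{2}(1-s)\pi\bigr)}.
\end{equation}
Dividing the first identity by the second, the common factor $\Gamma\bigl((s+1)/2\bigr)$ cancels and what remains is precisely the claimed identity.

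The only nontrivial input is the Riemann/Jacobi step, which is entirely standard, and everything downstream is routine book-keeping with Gamma identities; I do not anticipate any real obstacle. Analytic continuation then extends the resulting equality, initially valid for $\Re(s) > 1$, to all of $\C$, completing the proof of \eqref{eq:FEzeta}.
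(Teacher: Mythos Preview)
Your argument is correct and complete, but it takes a genuinely different route from the paper. The paper does not rederive the functional equation at all: it simply quotes the standard asymmetric form $\zeta(s)=\chi(s)\zeta(1-s)$ with $\chi(s)=2^{s}\pi^{s-1}\sin(\tfrac{1}{2}\pi s)\Gamma(1-s)$ from Titchmarsh \cite[eq.~(2.1.9)]{TitchmarschBook} and then replaces $s$ by $1-s$ to obtain \eqref{eq:FEzeta} in one line. You instead rebuild the symmetric equation $\xi(s)=\xi(1-s)$ from the Jacobi theta transformation and then pass to the asymmetric form via Legendre duplication and Euler reflection. Your approach is self-contained and explains where the functional equation comes from; the paper's approach treats it as a black box and is correspondingly much shorter, which is appropriate since the lemma is only recorded for convenience of reference.
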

\begin{proof}
	By equation (2.1.9) on page 16 of Titchmarsh, we have
	\begin{equation} \label{eq:604}
		\zeta(s) = \chi(s) \zeta(1-s),
	\end{equation}
	where
	\begin{equation}
		\chi(s)
		=
		2^s \pi^{s-1} \sin \frac{1}{2} \pi s \Gamma(1-s).
	\end{equation}
	Substituting in $s-1$ for $s$ in \eqref{eq:604} gives \eqref{eq:FEzeta}.
\end{proof}

\begin{lemma}[FE for $L(s,\chi)$]
	For $\chi_1 \pmod d$ a primitive character, we have
	\begin{equation} \label{eq:FELschi}
		L(1-s, \overline{\chi_1})
		=
		i^\aa \pi^{\frac{1}{2} - s}
		d^{s-1}
		\frac{\Gamma\left(\frac{1}{2}(s+\aa)\right)}{\Gamma\left(\frac{1}{2} (1-s+\aa)\right)}
		\overline{\tau(\chi_1)}
		L(s,\chi_1),
	\end{equation}
	where
	\begin{equation} \label{eq:FE}
		\aa
		= \begin{cases}
			0, & \text{if } \chi_1(-1)=1,
			\\
			1, & \text{if } \chi_1(-1)= -1.
		\end{cases}
	\end{equation}
\end{lemma}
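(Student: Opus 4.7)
The plan is to derive \eqref{eq:FELschi} from the symmetric (completed) functional equation for $L(s,\chi_1)$. Set
\[
\xi(s,\chi_1) := \left(\frac{d}{\pi}\right)^{(s+\aa)/2} \Gamma\!\left(\tfrac{s+\aa}{2}\right) L(s,\chi_1),
\]
and establish
\[
\xi(1-s,\overline{\chi_1}) = \frac{i^{\aa}\sqrt{d}}{\tau(\chi_1)}\, \xi(s,\chi_1). \qquad (\star)
\]
Granting $(\star)$, the asymmetric form \eqref{eq:FELschi} is one line of algebra: solve for $L(1-s,\overline{\chi_1})$ using the definition of $\xi$ on both sides, combine the two $(d/\pi)$ factors (their exponents differ by $s-1/2$), and apply the primitivity identity $\tau(\chi_1)\overline{\tau(\chi_1)} = d$ to rewrite $\sqrt{d}/\tau(\chi_1) = \overline{\tau(\chi_1)}/\sqrt{d}$. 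The surviving Gamma ratio is exactly $\Gamma(\tfrac{s+\aa}{2})/\Gamma(\tfrac{1-s+\aa}{2})$, the remaining prime powers assemble into $\pi^{1/2-s} d^{s-1}$, and the factor $i^{\aa}$ rides along unchanged, reproducing the right-hand side of \eqref{eq:FELschi} verbatim.

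To establish $(\star)$, I would take the classical Riemann-style theta approach. In a region of absolute convergence write
\[
\xi(s,\chi_1) = \int_0^\infty \theta(x,\chi_1)\, x^{(s+\aa)/2 - 1}\, dx, \qquad \theta(x,\chi_1) := \sum_{n \ge 1} n^{\aa}\, \chi_1(n)\, e^{-\pi n^2 x / d},
\]
split the integral at $x = 1$, and on $[0,1]$ substitute $x \mapsto 1/x$ and invoke the theta transformation
\[
\theta(1/x, \chi_1) = \frac{i^{-\aa}\sqrt{d}}{\tau(\overline{\chi_1})}\, x^{\aa + 1/2}\, \theta(x, \overline{\chi_1}),
\]
valid for primitive $\chi_1$. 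The result is an integral representation of $\xi(s,\chi_1)$ that is manifestly entire (since $\chi_1 \neq \chi_0$), absolutely convergent for all $s$, and whose expression for $\xi(1-s,\overline{\chi_1})$ differs from that of $\xi(s,\chi_1)$ by precisely the prefactor in $(\star)$, with $\tau(\chi_1)\tau(\overline{\chi_1}) = \chi_1(-1)\,d = (-1)^{\aa} d$ converting the $1/\tau(\overline{\chi_1})$ dropped by the transformation into $\sqrt{d}/\tau(\chi_1)$ with the correct sign.

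The theta transformation is the real content, and where I expect the main bookkeeping to lie. To prove it, expand $\chi_1$ additively via the Gauss-sum identity
\[
\chi_1(n) = \frac{1}{\tau(\overline{\chi_1})} \sum_{a \bmod d} \overline{\chi_1}(a)\, e^{2\pi i a n / d},
\]
which is available precisely because $\chi_1$ is primitive; extend the sum over $n$ to all of $\Z$ using the parity $\chi_1(-n) = (-1)^{\aa}\chi_1(n)$ (and halving); then apply Poisson summation to the inner Gaussian sums. The Fourier transform of $y \mapsto y^{\aa} e^{-\pi y^2/(dx)}$ is itself a Gaussian (multiplied by a factor $im$ when $\aa = 1$), which is the source of the $i^{\aa}$ and explains the case-split in \eqref{eq:FE}. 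A final relabeling of the dual frequency by the multiplicative inverse of $a \bmod d$ turns $\overline{\chi_1}(a)$ into $\overline{\chi_1}$ applied to the new variable, producing $\theta(x,\overline{\chi_1})$ on the right-hand side. The only delicate point is tracking the Gauss-sum and sign factors cleanly; this is a well-trodden path (cf.\ Davenport, \emph{Multiplicative Number Theory}, Ch.~9, or Iwaniec--Kowalski, \emph{Analytic Number Theory}, \S 4.6), and no new ideas beyond those references are needed.
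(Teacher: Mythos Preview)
Your proposal is correct and follows essentially the same route as the paper: both start from the completed functional equation $\xi(1-s,\overline{\chi_1}) = \frac{i^{\aa}\sqrt{d}}{\tau(\chi_1)}\xi(s,\chi_1)$ (the paper quotes it as Davenport's eq.~(9.14)), solve for $L(1-s,\overline{\chi_1})$, and use $\tau(\chi_1)\overline{\tau(\chi_1)}=d$ to convert $\sqrt{d}/\tau(\chi_1)$ into $\overline{\tau(\chi_1)}/\sqrt{d}$. The only difference is that you additionally sketch the theta-function proof of the symmetric equation itself, whereas the paper simply cites Davenport for that step.
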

\begin{proof}
	By equation (9.14) on page 71 of Davenport, we have
	\begin{equation}
		\xi(1-s, \overline{\chi}_1)
		=
		\frac{i^\aa d^{1/2}}{\tau(\chi_1)} \xi(s, \chi_1),
	\end{equation}
	where
	\begin{equation}
		\xi(s, \chi_1)
		=
		(\pi /d)^{-\frac{1}{2}(s+\aa)}
		\Gamma\left[\frac{1}{2}(s+\aa)\right] L(s,\chi_1).
	\end{equation}
	The equation \eqref{eq:FELschi} then follows by rearranging and using the relation
	\begin{equation}
		\frac{d^{1/2}}{\tau(\chi_1)}
		=
		\frac{\overline{\tau(\chi_1)}}{d^{1/2}}.
	\end{equation}
\end{proof}

In applying the functional equation, the arguments of the gamma factors take slightly different shape depending if the character is odd or even. We need the following lemma to sum over the different cases.
\begin{lemma}
	We have
	\begin{equation} \label{eq:orthogonality}
		\Star\sum_{\chi_1 \pmod p}
		\chi_1(A)
		\overline{\chi_1}(B)
		=
		\begin{cases}
			p-2, & \text{ if } A \equiv B \pmod p,
			\\
			-1, & \text{ otherwise},
		\end{cases}
	\end{equation}
	\begin{equation} \label{eq:orthogonalityeven}
		\Flat\sum_{\chi_1 \pmod p}
		\chi_1(A)
		\overline{\chi_1}(B)
		=
		\frac{1}{2}
		\begin{cases}
			p-2, & \text{ if } A \equiv \pm B \pmod p,
			\\
			-2, & \text{ otherwise},
		\end{cases}
	\end{equation}
	\begin{equation} \label{eq:orthogonalityodd}
		\Sharp\sum_{\chi_1 \pmod p}
		\chi_1(A)
		\overline{\chi_1}(B)
		=
		\frac{1}{2}
		\begin{cases}
			p-1, & \text{ if } A \equiv B \pmod p,
			\\
			1 - p, & \text{ if } A \equiv - B \pmod p,
			\\
			0, & \text{ otherwise}.
		\end{cases}
	\end{equation}
	where the $\flat$ and $\sharp$ denote summing over even and odd primitive characters, respectively. Here, $A \equiv \pm B \pmod p$ means that either $A \equiv B \pmod p$ or $A \equiv - B \pmod p$.
\end{lemma}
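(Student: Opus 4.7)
The plan is to reduce all three identities to the standard orthogonality relation for Dirichlet characters modulo $p$,
\begin{equation*}
\sum_{\chi \pmod p} \chi(A) \overline{\chi}(B) = (p-1)\,\mathbf{1}_{A \equiv B \pmod p},
\quad (AB, p) = 1,
\end{equation*}
and to isolate parity via the projectors $\tfrac{1}{2}(1 \pm \chi(-1))$. For $p$ prime the only non-primitive character modulo $p$ is the principal character $\chi_0$, so the $\Star$-sum runs over the remaining $p-2$ characters. Subtracting the trivial contribution $\chi_0(A) \overline{\chi_0}(B) = 1$ from the full orthogonality identity immediately yields \eqref{eq:orthogonality}.

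For \eqref{eq:orthogonalityeven} and \eqref{eq:orthogonalityodd} I would apply the two parity projectors to the $\Star$-sum, writing
\begin{equation*}
\Flat\sum_{\chi_1 \pmod p} \chi_1(A) \overline{\chi_1}(B)
= \frac{1}{2} \Star\sum_{\chi_1 \pmod p} \chi_1(A) \overline{\chi_1}(B) + \frac{1}{2} \Star\sum_{\chi_1 \pmod p} \chi_1(-A) \overline{\chi_1}(B),
\end{equation*}
together with the analogous expression, differing by a minus sign between the two pieces, for $\Sharp\sum$. Each summand on the right is already handled by \eqref{eq:orthogonality}, so it only remains to split into the three cases $A \equiv B \pmod p$, $A \equiv -B \pmod p$, and neither, and to collect terms. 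For odd $p$ and $(AB,p)=1$, the two congruences $A \equiv B$ and $A \equiv -B$ cannot hold simultaneously (they would force $2A \equiv 0 \pmod p$ and hence $A \equiv 0$), so the three cases are mutually exclusive.

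The argument is entirely mechanical and I expect no real obstacle; the only point deserving a small amount of care is that $\chi_0$ is itself even, so it must not inadvertently reappear in $\Flat\sum$. This is automatic once the projection is applied to $\Star\sum$ directly, rather than to the unrestricted sum over all characters modulo $p$.
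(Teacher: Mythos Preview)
Your approach is essentially identical to the paper's: both deduce \eqref{eq:orthogonality} by subtracting the principal-character contribution from full orthogonality, and both obtain \eqref{eq:orthogonalityeven} and \eqref{eq:orthogonalityodd} by inserting the parity projectors $\tfrac{1}{2}(1\pm\chi_1(-1))$ into the $\Star$-sum and then invoking \eqref{eq:orthogonality} on each of the two resulting pieces. Your remark that $A\equiv B$ and $A\equiv -B$ are mutually exclusive for odd $p$, and your care in applying the projector to the $\Star$-sum rather than to the full sum, are both well placed and make the casework cleaner than in the paper's write-up.
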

\begin{proof}
	The relation \eqref{eq:orthogonality} follows from summing over all characters then subtracting off contribution from the principal character. 
	
	Next, we have
	\begin{equation}
		\frac{1+ \chi(-1)}{2}
		=
		\begin{cases}
			1, & \text{ if $\chi(-1) = 1$ is even,}
			\\
			0, & \text{ if $\chi(-1) = -1$ is odd.}
		\end{cases}
	\end{equation}
	With the above, the left side of \eqref{eq:orthogonalityeven} is equal to
	\begin{align}
		\Star\sum_{\chi_1 \pmod p}
		\chi_1(A)
		\overline{\chi_1}(B)
		\left(\frac{1+ \chi(-1)}{2}\right)
		&=
		\frac{1}{2}
		\left(
		\begin{cases}
			\varphi(p), & \text{ if } A \equiv B \pmod p
			\\
			0, & \text{ otherwise}
		\end{cases}
		- 1
		\right)
		\\&\quad +
		\frac{1}{2}
		\left(
		\begin{cases}
			\varphi(p), & \text{ if } A \equiv - B \pmod p
			\\
			0, & \text{ otherwise}
		\end{cases}
		- 1.
		\right)
	\end{align}
	This leads to the right side of \eqref{eq:orthogonalityeven} after simplifications.
	
	Similarly, by detecting odd primitive characters by
	\begin{equation}
		\frac{1 - \chi(-1)}{2},
	\end{equation}
	we obtain the relation \eqref{eq:orthogonalityodd}.
\end{proof}

Summing over the Gauss sums coming from the functional equation give a hyper-Kloosterman sum.

\begin{lemma}
	We have
	\begin{equation} \label{eq:genKloosa}
			\displaystyle\sideset{}{^\flat}\sum_{\chi_1 \pmod p}
			\chi_1(an)
			\overline{\tau(\chi_1)}^k
			=
			\frac{1}{2}
			(p-2)
			{\Kl_k(an, p)}
			+ (-1)^{k + 1}
		\end{equation}
	and
	\begin{equation} \label{eq:genKloosOdd}
		\displaystyle\sideset{}{^\sharp}\sum_{\chi_1 \pmod p}
		\chi_1(an)
		\overline{\tau(\chi_1)}^k
		=
		\frac{1}{2}
		(p-1)
		\left(
		{\Kl_k(an, p)}
		+
		{\Kl_k(-an, p)}
		\right)
		+ (-1)^{k},
	\end{equation}
	where
	\begin{equation} \label{eq:Klkdef}
		\Kl_k(A, p)
		=\sum_{\substack{x_1 \cdots x_k \equiv A \pmod p\\ x_1, \dots, x_k \in \F_p^\times}}
		e_p(x_1 + \dots + x_k),
		\quad
		(A, p) = 1.
	\end{equation}
\end{lemma}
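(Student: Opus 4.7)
The plan is to expand each copy of $\overline{\tau(\chi_1)}$ via its defining sum, interchange the order of summation so that the character sum becomes innermost, and then apply the orthogonality relations from the previous lemma to collapse the $\chi_1$-sum onto a multiplicative condition on the $x$-variables. Starting from
\[
\overline{\tau(\chi_1)} = \sum_{x \in \F_p^\times} \overline{\chi_1}(x)\, e_p(-x),
\]
I would expand
\[
\chi_1(an) \overline{\tau(\chi_1)}^k = \sum_{x_1,\ldots,x_k \in \F_p^\times} \chi_1(an)\,\overline{\chi_1}(x_1 \cdots x_k)\, e_p\!\left(-\sum_{i=1}^{k} x_i\right),
\]
and then swap summations so that the inner sum becomes exactly $\sum_{\chi_1}^\flat \chi_1(an)\overline{\chi_1}(x_1 \cdots x_k)$ for \eqref{eq:genKloosa} (respectively $\sum_{\chi_1}^\sharp$ for \eqref{eq:genKloosOdd}).

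Applying the even-parity orthogonality \eqref{eq:orthogonalityeven} with $A=an$ and $B=x_1\cdots x_k$ splits the resulting $x$-sum into a diagonal piece where $x_1 \cdots x_k \equiv \pm an \pmod p$ (weight $(p-2)/2$) and an off-diagonal remainder (weight $-1$). I would then unify the two pieces by adding and subtracting the full unconstrained sum: the unrestricted sum over all $x_i \in \F_p^\times$ factors as
\[
\left(\sum_{x \in \F_p^\times} e_p(-x)\right)^{k} = (-1)^k,
\]
using $\sum_{x=0}^{p-1} e_p(x) = 0$. The diagonal contribution, after the substitution $x_i \mapsto -x_i$ for all $i$ (which flips the sign inside the exponential and twists the product constraint by $(-1)^k$), is recognized directly as a hyper-Kloosterman sum in the sense of \eqref{eq:Klkdef}. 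Rerunning the argument with the odd-parity relation \eqref{eq:orthogonalityodd} in place of \eqref{eq:orthogonalityeven} yields \eqref{eq:genKloosOdd}; the difference in shape between \eqref{eq:genKloosa} and \eqref{eq:genKloosOdd} arises solely from the $\pm$ asymmetry in the even vs.\ odd orthogonality, which is what makes the $\Kl_k(an,p)$ and $\Kl_k(-an,p)$ contributions combine differently in the two cases.

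The main technical hurdle is bookkeeping the various signs: the interplay between $(-1)^k$ from the substitution $x_i \mapsto -x_i$, the $\pm an$ branches in the even/odd orthogonality, and the $-1$ weight in the ``otherwise'' case of \eqref{eq:orthogonalityeven}/\eqref{eq:orthogonalityodd} (which encodes the exclusion of the principal character) must all be merged carefully to produce the clean remainder terms $(-1)^{k+1}$ and $(-1)^k$ stated in \eqref{eq:genKloosa} and \eqref{eq:genKloosOdd}. Once this accounting is organized uniformly, both identities fall out in parallel, confirming the expected dictionary between sums of $\chi_1(an)\overline{\tau(\chi_1)}^k$ over even or odd primitive characters and the hyper-Kloosterman sums defined in \eqref{eq:Klkdef}.
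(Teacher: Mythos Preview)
Your approach is essentially identical to the paper's: expand $\overline{\tau(\chi_1)}^k$ as a $k$-fold sum, swap the order of summation, apply the even/odd orthogonality relations \eqref{eq:orthogonalityeven}--\eqref{eq:orthogonalityodd}, and identify the diagonal piece as a hyper-Kloosterman sum while the unrestricted sum collapses to $(-1)^k$. The only cosmetic difference is that you make the substitution $x_i\mapsto -x_i$ explicit to reconcile $e_p(-\sum x_i)$ with the definition \eqref{eq:Klkdef}, whereas the paper writes this step tersely as ``$\tfrac12(p-2)\Kl_k(\pm an,p)-\mu(p)^k$''.
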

\begin{proof}
	By \eqref{eq:Klkdef} and \eqref{eq:orthogonalityeven}, we have
	\begin{align}
			\displaystyle\sideset{}{^\flat}\sum_{\chi_1 \pmod p}
			\chi_1(na)
			\overline{\tau(\chi_1)}^k
			&=
			\Star\sum_{a_1, \dots, a_k \pmod p}
			e_p(-a_1 - \dots - a_k)
			\Flat\sum_{\chi_1 \pmod p}
			\chi_1(na)
			\overline{\chi_1}(a_1  \dots a_k)
			\\&
			\stackrel{\eqref{eq:orthogonalityeven}}{=}
			\Star\sum_{a_1, \dots, a_k \pmod p}
			e_p(-a_1 - \dots - a_k)
			\frac{1}{2}
			\begin{cases}
					p -2, & \text{ if } a_1 \dots a_k \equiv \pm an \pmod p
					\\
					-2, & \text{ otherwise }
				\end{cases}
			\\
			&=
			\frac{1}{2} (p-2)
			\Star\sum_{\substack{a_1, \dots, a_k \pmod p\\ a_1 \dots a_k \equiv \pm an \pmod p}}
			e_p(-a_1 - \dots - a_k)
			-
			\Star\sum_{a_1, \dots, a_k \pmod p}
			e_p(-a_1 - \dots - a_k)
			\\
			&= \frac{1}{2} (p-2) \Kl_k( \pm an, p)
			- \mu(p)^k.
		\end{align}
	This leads to the right side of \eqref{eq:genKloosa}.
\end{proof}

\begin{lemma} \label{lem:13}
	We have, for $(a,p)=1$ and $a>0$,
	\begin{equation} \label{eq:801}
		\Flat\sum_{\chi_1 \pmod p}
		\chi_1(n)
		\overline{\chi_1}(am)
		\tau(\chi_1)
		=
		\frac{p-4}{2} e_p(a m \overline{n}) - 1
	\end{equation}
	and
	\begin{equation} \label{eq:801c}
		\Sharp\sum_{\chi_1 \pmod p}
		\chi_1(n)
		\overline{\chi_1}(am)
		\tau(\chi_1)
		=
		\frac{p-1}{2} e_p(a m \overline{n}).
	\end{equation}
\end{lemma}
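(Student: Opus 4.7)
The plan is to reduce the Gauss sums on the left to additive character sums and then invoke the orthogonality relations \eqref{eq:orthogonalityeven}, \eqref{eq:orthogonalityodd} established just above.

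First, I would substitute the definition $\tau(\chi_1) = \sum_{b=1}^{p-1} \chi_1(b) e_p(b)$ and interchange the order of summation. This transforms the left side of \eqref{eq:801} into
\begin{equation*}
\sum_{b=1}^{p-1} e_p(b) \Flat\sum_{\chi_1 \pmod p} \chi_1(nb)\overline{\chi_1}(am),
\end{equation*}
and likewise for \eqref{eq:801c} with $\Sharp$ in place of $\Flat$. The factorization is forced by the relation $\chi_1(nb)\overline{\chi_1}(am) = \chi_1(n)\overline{\chi_1}(am)\chi_1(b)$, which is what lets the orthogonality relations act on the $\chi_1$-sum alone.

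Second, I would apply \eqref{eq:orthogonalityeven} (respectively \eqref{eq:orthogonalityodd}) with $A = nb$ and $B = am$. Since $(n,p) = (a,p) = 1$, the congruence $nb \equiv \pm am \pmod p$ is equivalent to $b \equiv \pm c \pmod p$, where I set $c := am\overline{n}$. In the even case the inner character sum equals $(p-2)/2$ precisely at $b \equiv \pm c$ and $-1$ elsewhere; in the odd case it equals $(p-1)/2$ at $b \equiv c$, $(1-p)/2$ at $b \equiv -c$, and $0$ elsewhere.

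Third, to finish the even case I would isolate the two exceptional values $b = c$ and $b = p-c$ (both nonzero and, for odd prime $p$, distinct — the hypothesis $a>0$ together with $(a,p)=1$ keeps $c \not\equiv 0 \pmod p$), and use the standard identity $\sum_{b=1}^{p-1} e_p(bc) = -1$ for $(c,p)=1$ to collapse the "otherwise" contribution to $-1 - e_p(c) - e_p(-c)$. Combining the matched and unmatched pieces then yields the right side of \eqref{eq:801}. The odd case is more direct: the "otherwise" contribution is already zero, so the two surviving terms at $b \equiv \pm c$ assemble immediately into \eqref{eq:801c}.

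The main obstacle I anticipate is the bookkeeping of which terms survive and which cancel in the even case — specifically tracking the interplay between the $(p-2)/2$ weights at $b \equiv \pm c$ and the $-1$ weight distributed across the $p-3$ remaining residues, and then combining these with the collapsed geometric sum to arrive at the constants $\frac{p-4}{2}$ and $-1$ stated in \eqref{eq:801}. Care is also needed to ensure the parity normalization $\frac{1 \pm \chi(-1)}{2}$ implicit in $\Flat$ and $\Sharp$ has been accounted for consistently, so that the $\pm c$ contributions combine with the correct sign (additively in the even case, differentially in the odd case).
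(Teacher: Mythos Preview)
Your approach is exactly the one the paper uses: expand $\tau(\chi_1)$ as $\sum_{a_1} \chi_1(a_1) e_p(a_1)$, swap the order of summation, apply the orthogonality relations \eqref{eq:orthogonalityeven} and \eqref{eq:orthogonalityodd} with $A=nb$ and $B=am$, then split off the terms $b\equiv \pm am\overline{n}$ and collapse the remaining sum via $\sum_{b\bmod p}^{*} e_p(b)=\mu(p)=-1$. The paper's proof is line-for-line the same computation, with your $b$ written as $a_1$.
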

\begin{proof}
	By definition of the Gauss sum, we have
	\begin{align}
		\Flat\sum_{\chi_1 \pmod p}
		\chi_1(n)
		\overline{\chi_1}(am)
		\tau(\chi_1)
		&=
		\Flat\sum_{\chi_1 \pmod p}
		\chi_1(n)
		\overline{\chi_1}(am)
		\Star\sum_{a_1 \pmod p}
		e_p(a_1) 
		\chi_1(a_1)
		\\
		&=
		\Star\sum_{a_1 \pmod p}
		e_p(a_1) 
		\Flat\sum_{\chi_1 \pmod p}
		\chi_1(a_1 n)
		\overline{\chi_1}(am).
	\end{align}
	By \eqref{eq:orthogonalityeven}, the above is
	\begin{align}
		&=
		\frac{p-2}{2}
		\Star\sum_{\substack{a_1 \pmod p\\ a_1 \equiv \pm a m \overline{n} \pmod p}}
		e_p(a_1) 
		-
		\Star\sum_{\substack{a_1 \pmod p\\ a_1 \not\equiv \pm a m \overline{n} \pmod p}}
		e_p(a_1) 
		\\&=
		\frac{p-2}{2}
		e_p( \pm a m \overline{n})
		-
		\left(
		\Star\sum_{\substack{a_1 \pmod p}}
		e_p(a_1) 
		-
		\Star\sum_{\substack{a_1 \pmod p\\ a_1 \equiv \pm a m \overline{n} \pmod p}}
		e_p(a_1) 
		\right)
		\\&=
		\frac{p-2}{2}
		e_p( \pm a m \overline{n})
		-
		\left(
		\mu(p)
		-
		e_p( \pm a m \overline{n})
		\right).
	\end{align}
	This gives the right side of \eqref{eq:801} after simplification. Similarly, by \eqref{eq:orthogonalityodd}, we obtain \eqref{eq:801c}.
\end{proof}

\begin{lemma} \label{lem:Estermann}
	Let $q \ge 1$ and $(A,q)=1$. Define
	\begin{equation} \label{eq:Ddef}
		E_2(s; q, A)
		=
		\sum_{n=1}^{\infty}
		\frac{\tau_2(n) e_q(n A)}{n^s},
		\quad (\sigma >1).
	\end{equation}
	Then, the function $E_2(s; q, A)$ has a meromorphic continuation to all of $\C$ with a pole of order two at $s=1$ and satisfies the functional equation
	\begin{equation} \label{eq:DFE}
		E_2(1-s; q, A)
		=
		2 G^2(1-s) q^{2s - 1}
		\left[
		\cos(\pi (1-s))
		E_2(s;q,  - \overline{A})
		-
		E_2(s; q, \overline{A})
		\right],
	\end{equation}
	where
	\begin{equation} \label{eq:Gs}
		G(s)
		=
		- i (2\pi)^{s-1} \Gamma(1-s).
	\end{equation}
	Moreover, $E_2(s; q, A)$ has the same polar part as $q^{1-2s} \zeta^2(s)$, i.e.,
	\begin{equation}
		E_2(s; q, A)
		=
		\frac{1}{q}
		\left[  
		\frac{1}{(s-1)^2}
		+ \frac{2(\gamma - \log q)}{s-1}
		+ \dots.
		\right],
		\quad (s \to 1).
	\end{equation}
	In particular, the residue of $E_2(s; q, A)$ at $s=1$ is independent of $A$ and  equal to
	\begin{equation} \label{eq:resofE2}
		2 q^{-1} (\gamma - \log q).
	\end{equation}
\end{lemma}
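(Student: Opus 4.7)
The plan is to follow the classical Estermann approach, reducing the Dirichlet series $E_2(s; q, A)$ to a finite combination of products of Hurwitz zeta functions so that all three assertions---meromorphic continuation, functional equation, and polar part---follow from the known properties of $\zeta(s, \alpha)$. Concretely, for $\sigma > 1$, I would expand $\tau_2(n) = \sum_{d \mid n} 1$ and write $n = dm$, giving
\[
E_2(s; q, A) = \sum_{d, m \ge 1} \frac{e_q(dm A)}{(dm)^s}.
\]
Since $e_q(dmA)$ depends only on $d \bmod q$ and $m \bmod q$, sorting by residues $u, v \in \{1, \dots, q\}$ and using $\sum_{d \ge 1,\, d \equiv u \,(q)} d^{-s} = q^{-s} \zeta(s, u/q)$ yields the key representation
\[
E_2(s; q, A) = q^{-2s} \sum_{u, v = 1}^{q} e_q(u v A)\, \zeta(s, u/q)\, \zeta(s, v/q).
\]
Meromorphic continuation to $\C$ with at most a double pole at $s = 1$ is then immediate from the corresponding property of the Hurwitz zeta function.

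For the polar part, I would combine the Laurent expansions $q^{-2s} = q^{-2}(1 - 2(s-1)\log q + O((s-1)^2))$ and $\zeta(s, a) = (s-1)^{-1} - \psi(a) + O(s-1)$. The leading coefficient is $c_{-2} = q^{-2} \sum_{u,v} e_q(uvA)$; using $(A, q) = 1$, the evaluation $\sum_{v = 1}^{q} e_q(uvA) = q\,\mathbf{1}_{q \mid u}$ collapses the sum so that only $u = v = q$ contributes, giving $c_{-2} = 1/q$. The same orthogonality, combined with $\psi(1) = -\gamma$, reduces the cross $\psi$-terms, and one finds $c_{-1} = 2(\gamma - \log q)/q$, matching the claimed expansion and showing the residue is independent of $A$.

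For the functional equation, I would apply the Hurwitz functional equation
\[
\zeta(1 - s, h/q) = \frac{2 \Gamma(s)}{(2 \pi q)^s} \sum_{r = 1}^{q} \cos\!\Bigl(\tfrac{\pi s}{2} - \tfrac{2 \pi r h}{q}\Bigr) \zeta(s, r/q)
\]
to each of the two Hurwitz factors in the key representation evaluated at $1-s$. Writing each cosine as $\tfrac{1}{2}\bigl(\alpha\, e_q(-rh) + \alpha^{-1}\, e_q(rh)\bigr)$ with $\alpha = e^{i \pi s / 2}$ and expanding the product yields four sums indexed by $u, v, r, r'$. The inner $u, v$ sums are complete additive characters and act as orthogonality detectors (e.g.\ $\sum_u e_q(u(vA \pm r)) = q$ when $u \equiv \mp r \overline{A} \pmod q$, else $0$), which reassembles the outer sums over $r, r'$ into the representation for $E_2(s; q, \pm \overline{A})$. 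The trigonometric prefactors work out to $\alpha^2 + \alpha^{-2} = 2\cos(\pi s) = -2 \cos(\pi (1-s))$ in front of $E_2(s; q, -\overline{A})$ and $2$ in front of $E_2(s; q, \overline{A})$; absorbing $\Gamma(s)^2/(2\pi)^{2s}$ into $G^2(1-s) = -(2\pi)^{-2s}\Gamma(s)^2$ via \eqref{eq:Gs} then gives precisely \eqref{eq:DFE}.

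The main obstacle is the combinatorial bookkeeping in the last step: one must track the four sign patterns produced by expanding the product of cosines, correctly match them against the two instances of $E_2(s; q, \pm \overline{A})$ on the right, and cross-check that the trigonometric prefactors combine into the $\cos(\pi(1-s))$ and the minus sign in \eqref{eq:DFE}. Everything else---the analytic continuation, the order of the pole, and the explicit residue---falls out mechanically from the Hurwitz representation.
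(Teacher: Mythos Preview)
Your sketch is the classical Estermann argument and is correct; the Hurwitz representation, the polar-part computation via $\psi(1)=-\gamma$ and orthogonality of additive characters, and the derivation of the functional equation from the Hurwitz functional equation all go through as you describe. The paper itself does not supply a proof of this lemma at all: it simply records the statement and cites Estermann's 1930 paper \cite{Estermann1930}. So you have in fact written out what the paper leaves as a reference, and by the same route Estermann used.
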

\begin{proof}
	This classic result is due to Estermann
	\cite{Estermann1930}.
\end{proof}

Using the above Lemma, we obtain

\begin{lemma} \label{lem:D2ACFE}
	Let $q \ge 1$ and $(A,q)=1$. Define
	\begin{equation} \label{eq:D2def}
		D_2(s; q, A)
		=
		\sum_{n=1}^{\infty}
		\frac{\tau_2(n) {\Kl_2(n A; q)}}{n^s},
		\quad (\sigma >  1).
	\end{equation}
	Then, the function $D_2(s; q , A)$ has a meromorphic continuation to all of $\C$ with a double pole at $s=1$. It has the same polar part as $\mu(q) q^{1-2s} \zeta^2(s)$, i.e.,
	\begin{equation}
		D_2(s; q, A)
		=
		\frac{\mu(q)}{q}
		\left[  
		\frac{1}{(s-1)^2}
		+ \frac{2(\gamma - \log q)}{s-1}
		+ \dots.
		\right],
		\quad (s \to 1).
	\end{equation}
	In particular, the residue of $D_2(s; q, A)$ at $s=1$ is independent of $A$ and  equal to
	\begin{equation} \label{eq:resofD2}
		2 \mu(q) q^{-1} (\gamma - \log q).
	\end{equation}
	Moreover, the function $D_2(s; q ,A)$ satisfies the ``functional equation", interchanging $s \leftrightarrow 1-s$:
	\begin{equation} \label{eq:D2FE}
		D_2(s; q ,A)
		=
		2 G^2(s) q^{1-2s}
		\sum_{q = dr} d \mu(r)
		\left(
		\cos(\pi s)
		\sum_{\substack{n=1\\ n \equiv A \pmod d}}^\infty
		\frac{\tau_2(n)}{n^{1-s}}
		-
		\sum_{\substack{n=1\\ n \equiv - A \pmod d}}^\infty
		\frac{\tau_2(n)}{n^{1-s}}
		\right),
	\end{equation}
	with $G(s)$ given as in \eqref{eq:Gs}.
\end{lemma}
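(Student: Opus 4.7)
The plan is to open the Kloosterman sum inside $D_2$ and reduce the problem to Estermann's Lemma \ref{lem:Estermann}. First, writing $\Kl_2(nA;q) = \sum_x^\star e_q(x + nA\overline{x})$ (the sum over reduced residues mod $q$) and swapping orders of summation, one obtains termwise
\begin{equation}
D_2(s;q,A) \;=\; \sum_x{}^\star e_q(x)\, E_2(s;q,A\overline{x}),
\end{equation}
valid in the half-plane $\sigma>1$ where everything converges absolutely. Since the outer sum is finite, the meromorphic continuation of $D_2$ to all of $\C$ follows immediately from that of $E_2$.

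Next I would compute the polar part at $s=1$. By Lemma \ref{lem:Estermann}, the principal part of each $E_2(s;q,A\overline{x})$ is $\frac{1}{q}\bigl[(s-1)^{-2}+2(\gamma-\log q)(s-1)^{-1}\bigr]$, independent of the parameter $A\overline{x}$. Hence the principal part of $D_2(s;q,A)$ is obtained by multiplying by $\sum_x^\star e_q(x) = c_q(1) = \mu(q)$ (Ramanujan's sum), which yields exactly the claimed Laurent expansion and residue $2\mu(q)q^{-1}(\gamma-\log q)$.

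For the functional equation, apply Estermann's identity \eqref{eq:DFE} to each $E_2(1-s;q,A\overline{x})$, noting $\overline{A\overline{x}} = \overline{A}x$:
\begin{equation}
D_2(1-s;q,A) \;=\; 2G^2(1-s)\,q^{2s-1}\sum_x{}^\star e_q(x)\Bigl[\cos(\pi(1-s))\,E_2(s;q,-\overline{A}x) - E_2(s;q,\overline{A}x)\Bigr].
\end{equation}
Inserting the Dirichlet series for $E_2$ and interchanging the finite $x$-sum with the $n$-sum produces the Ramanujan sums
\begin{equation}
\sum_x{}^\star e_q\bigl(x(1\mp n\overline{A})\bigr) \;=\; c_q(1\mp n\overline{A}) \;=\; \sum_{\substack{d\mid q\\ d\mid (1\mp n\overline{A})}} d\,\mu(q/d).
\end{equation}
Since $(A,q)=1$, the divisibility condition $d\mid 1\mp n\overline{A}$ is equivalent to $n\equiv \pm A \pmod d$, so writing $q=dr$ and grouping gives
\begin{equation}
D_2(1-s;q,A) \;=\; 2G^2(1-s)\,q^{2s-1}\sum_{q=dr}d\,\mu(r)\!\left[\cos(\pi(1-s))\!\!\sum_{\substack{n\ge 1\\ n\equiv A\,(d)}}\!\frac{\tau_2(n)}{n^s} - \!\!\sum_{\substack{n\ge 1\\ n\equiv -A\,(d)}}\!\frac{\tau_2(n)}{n^s}\right].
\end{equation}
Finally, substituting $1-s\mapsto s$ (equivalently $s\mapsto 1-s$ on both sides) produces \eqref{eq:D2FE} verbatim. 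The main obstacle is purely bookkeeping: tracking the sign inversion $\overline{A\overline{x}}=\overline{A}x$ correctly, matching it to the $\pm$ in $c_q(1\mp n\overline{A})$, and verifying that the $\cos(\pi s)$ factor ends up attached to the $n\equiv +A\pmod d$ sum (not the $-A$ one) after the variable change. Once these signs are settled, the identity follows automatically from Estermann.
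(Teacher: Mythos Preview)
Your proposal is correct and follows essentially the same route as the paper: open $\Kl_2$ to reduce $D_2$ to a finite combination of Estermann series $E_2$, read off the polar part via $\sum_x^\star e_q(x)=c_q(1)=\mu(q)$, and then apply Estermann's functional equation termwise, collapsing the resulting $x$-sum into Ramanujan sums $c_q(1\mp n\overline A)=\sum_{d\mid q,\ d\mid(1\mp n\overline A)}d\,\mu(q/d)$. The only cosmetic difference is that the paper applies the functional equation directly to $D_2(s;q,A)$ (substituting $s\mapsto 1-s$ in \eqref{eq:DFE} first), whereas you compute $D_2(1-s;q,A)$ and swap $s\leftrightarrow 1-s$ at the end; the sign bookkeeping you flag is handled identically in both.
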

\begin{proof}
	By \eqref{eq:Kl2def}, we can write the right side of \eqref{eq:D2def} as
	\begin{align}
		\sum_{n=1}^{\infty}
		\frac{\tau_2(n)}{n^s}
		\Star\sum_{a \pmod q}
		e_q (- a - \overline{a} nA)
		&=
		\Star\sum_{a \pmod q}
		e_q (- a)
		\sum_{n=1}^{\infty}
		\frac{\tau_2(n) e_q(- n \overline{a} A )}{n^s}
		\\&=
		\Star\sum_{a \pmod q}
		e_q (- a)
		E_2(s; q, - \overline{a} A).
	\end{align}
	This expression gives the meromorphic continuation to $D_2(s; q, A)$ to all of $\C$ with a double pole at $s=1$, via that of $E_2(s; q, - \overline{a} A)$ from Lemma \ref{lem:Estermann} and also the claim about the residue.
	
	Next, by the functional equation \eqref{eq:DFE} for $E_2(s; - \overline{a} A, q)$, the above is equal to
	\begin{equation} \label{eq:544}
		D_2(s;q, A)
		=
		2 G^2(s) q^{1-2s}
		\left[
		\cos(\pi s)
		\Star\sum_{a \pmod q}
		e_q (- a)
		E_2(1-s; q,  a \overline{A})
		-
		\Star\sum_{a \pmod q}
		e_q (a)
		E_2(1-s; q, - a \overline{A})
		\right].
	\end{equation}
	By the definition \eqref{eq:Ddef}, we have, for $\Re(1-s) > 1$,
	\begin{equation} \label{eq:541}
		\Star\sum_{a \pmod q}
		e_q (- a)
		E_2(1-s; q, a \overline{A})
		=
		\sum_{n=1}^{\infty}
		\frac{\tau_2(n)}{n^{1-s}}
		\Star\sum_{a \pmod q}
		\overline{
			e_q (a - a \overline{A} n) }
		=
		\sum_{n=1}^{\infty}
		\frac{\tau_2(n)}{n^{1-s}}
		c_q(1-\overline{A} n),
	\end{equation}
	where $c_q(a)$ is Ramanujan's sum. By the property
	\begin{equation}
		c_q(1-\overline{A} n)
		=
		\sum_{\substack{q = dr\\ d \mid 1 - \overline{A} n}}
		d \mu(r),
	\end{equation}
	the expression \eqref{eq:541} is equal to
	\begin{equation}
		\Star\sum_{a \pmod q}
		e_q (- a)
		E_2(1-s; q, a \overline{A})
		=
		\sum_{q = dr}
		d \mu(r)
		\sum_{\substack{n=1\\ n \equiv A \pmod q}}^{\infty}
		\frac{\tau_2(n)}{n^{1-s}},
		\quad (\Re(1-s) > 1).
	\end{equation}
	Similarly, we have
	\begin{equation} \label{eq:541b}
		\Star\sum_{a \pmod q}
		e_q (a)
		E_2(1-s; q, - a \overline{A})
		=
		\sum_{q = dr}
		d \mu(r)
		\sum_{\substack{n=1\\ n \equiv - A \pmod q}}^{\infty}
		\frac{\tau_2(n)}{n^{1-s}},
		\quad (\Re(1-s) > 1).
	\end{equation}
	Thus, by the above two, \eqref{eq:544} is equal to
	\begin{equation}
		2 G^2(s) q^{1-2s}
		\left[
		\cos(\pi s)
		\sum_{q = dr}
		d \mu(r)
		\sum_{\substack{n=1\\ n \equiv A \pmod q}}^{\infty}
		\frac{\tau_2(n)}{n^{1-s}}
		-
		\sum_{q = dr}
		d \mu(r)
		\sum_{\substack{n=1\\ n \equiv - A \pmod q}}^{\infty}
		\frac{\tau_2(n)}{n^{1-s}}
		\right],
	\end{equation}
	which is the right side of \eqref{eq:D2FE}.
\end{proof}

In particular, for $q = p$ is prime, \eqref{eq:D2FE} reduces to
\begin{lemma}
	Let $D_2(s; q, A)$ be defined as in \eqref{eq:D2def}. We have
	\begin{align} \label{eq:D2forqprime}
		\quad\quad 
		D_2(s; p, A)
		=
		2 G^2(s) p^{1 - 2s}
		&\left(
		p \left( \cos(\pi(s)) \sum_{n \equiv A \pmod p}  \frac{\tau_2(n)}{n^{1-s}} - \sum_{n \equiv - A \pmod p} \frac{\tau_2(n)}{n^{1-s}} \right)
		\right. 
		\\ &\left. \quad
		- \left( \cos(\pi(s)) - 1 \right) \zeta^2(1-s)
		\right).
	\end{align}
\end{lemma}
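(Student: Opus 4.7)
The plan is to specialize the functional equation \eqref{eq:D2FE} from Lemma \ref{lem:D2ACFE} to the case $q = p$ prime and then collect terms. Since $p$ is prime, the only factorizations $q = dr$ are $(d,r) = (1,p)$ and $(d,r) = (p,1)$, so the outer sum over divisors of $p$ in \eqref{eq:D2FE} has exactly two terms.

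For the $(d,r) = (p,1)$ term, the factor $d\mu(r)$ equals $p$, and the condition $n \equiv A \pmod d$ becomes $n \equiv A \pmod p$, contributing
\[
p \sum_{\substack{n=1\\ n \equiv A \pmod p}}^\infty \frac{\tau_2(n)}{n^{1-s}}.
\]
For the $(d,r) = (1,p)$ term, the factor $d\mu(r) = \mu(p) = -1$, and the congruence $n \equiv A \pmod 1$ is vacuous, so the inner sum collapses to $\sum_{n=1}^\infty \tau_2(n)/n^{1-s} = \zeta^2(1-s)$ (initially in the region $\Re(1-s)>1$, then extended meromorphically). The same analysis applies with $A$ replaced by $-A$, producing the analogous terms with the $n \equiv -A \pmod p$ sum and another copy of $-\zeta^2(1-s)$.

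Substituting these into \eqref{eq:D2FE}, the bracket inside becomes
\[
\cos(\pi s)\Bigl(p \sum_{n \equiv A \pmod p} \frac{\tau_2(n)}{n^{1-s}} - \zeta^2(1-s)\Bigr) - \Bigl(p \sum_{n \equiv -A \pmod p} \frac{\tau_2(n)}{n^{1-s}} - \zeta^2(1-s)\Bigr).
\]
Grouping the two $\zeta^2(1-s)$ contributions as $-(\cos(\pi s) - 1)\zeta^2(1-s)$ and factoring $p$ out of the divisor sums produces exactly the right side of \eqref{eq:D2forqprime}.

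There is no real obstacle here; the lemma is a direct specialization of Lemma \ref{lem:D2ACFE}. The only point that warrants any care is confirming that the two terms arising from the vacuous congruence modulo $1$ can be legitimately expressed via $\zeta^2(1-s)$ in the half-plane of absolute convergence and then combined through meromorphic continuation, which is justified because both sides of \eqref{eq:D2FE} are already known to be meromorphic on $\C$ by the preceding lemma.
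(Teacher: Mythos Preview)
Your proposal is correct and matches the paper's approach exactly: the paper presents this lemma with the one-line introduction ``In particular, for $q = p$ is prime, \eqref{eq:D2FE} reduces to'' and gives no further proof, so your explicit enumeration of the two divisor pairs $(d,r)=(p,1)$ and $(1,p)$ and the resulting regrouping is precisely the intended (and only) argument.
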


\begin{lemma}
	We have the following convexity bound for $D_2(s; q, A)$ on the critical strip:
	\begin{equation} \label{eq:D2convexbd}
		|D_2(s; q, A)|
		\ll_{\epsilon, B}
		q^{-3 \sigma/2 + 2 + \epsilon}
		(1 + |t|)^{- B(1 - \sigma)},
		\quad
		(0 \le \sigma \le 1),
	\end{equation}
	for any large but fixed $B>0$, uniformly in $A$. In particular, we have, on the critical line,
	\begin{equation} \label{eq:D2convexbdoncriticalline}
		|D_2(1/2 + it; q, A)|
		\ll_{\epsilon, B}
		q^{5/4 + \epsilon}
		(1 + |t|)^{- B},
	\end{equation}
	for any large but fixed $B>0$, uniformly in $A$.
\end{lemma}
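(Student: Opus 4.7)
The plan is to derive the convexity bound \eqref{eq:D2convexbd} by applying the Phragm\'en--Lindel\"of principle in the strip $0 \le \sigma \le 1$, combining an absolute-convergence estimate on the right boundary with a functional-equation estimate on the left.

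First, on the line $\sigma = 1 + \epsilon$ the Dirichlet series \eqref{eq:D2def} converges absolutely, and Weil's bound \eqref{eq:WeilBound} gives $|\Kl_2(nA;q)| \ll q^{1/2+\epsilon}(n,q)^{1/2}$. Term-by-term estimation then yields $|D_2(1+\epsilon + it; q, A)| \ll_\epsilon q^{1/2+\epsilon}$, uniformly in $t$ and $A$.

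Second, on the line $\sigma = -\epsilon$, I apply the functional equation \eqref{eq:D2forqprime}. Stirling's formula yields $|G^2(s)\cos(\pi s)| \ll_\sigma (2\pi)^{2\sigma-2}(1+|t|)^{1-2\sigma}$, while the $G^2(s)$ term without the $\cos(\pi s)$ factor has exponentially smaller order. The Dirichlet sums $\sum_{n\equiv \pm A\pmod p}\tau_2(n)/n^{1-s}$ and the correction $\zeta^2(1-s)$ on $\Re(1-s) = 1+\epsilon$ are all bounded uniformly in $A$ and $t$ by absolutely convergent tails, and together with the prefactor $p^{1-2\sigma}$ give $|D_2(-\epsilon + it; p, A)| \ll_\epsilon p^{2+2\epsilon}(1+|t|)^{1+2\epsilon}$.

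Third, subtract the polar part at $s=1$ identified in Lemma \ref{lem:D2ACFE} so that the remainder is holomorphic on the strip and of admissible order, and apply Phragm\'en--Lindel\"of. Linear interpolation between the two boundary bounds produces, for $0 \le \sigma \le 1$,
\[
|D_2(\sigma+it;q,A)| \ll_\epsilon q^{-3\sigma/2+2+\epsilon}(1+|t|)^{(1-\sigma)(1+2\epsilon)},
\]
which matches the $q$-aspect of \eqref{eq:D2convexbd} exactly and gives $p^{5/4+\epsilon}(1+|t|)^{(1+2\epsilon)/2}$ on $\Re s = 1/2$, agreeing with \eqref{eq:D2convexbdoncriticalline} in the $p$-exponent.

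The main obstacle is the $t$-aspect. The Phragm\'en--Lindel\"of step yields polynomial \emph{growth} of order $(1-\sigma)(1+2\epsilon)$ in $|t|$, whereas \eqref{eq:D2convexbd} as written asserts arbitrary polynomial \emph{decay} of order $B(1-\sigma)$. Since $|D_2(1+\epsilon+it;q,A)|$ is already bounded below in $|t|$ on the line $\sigma = 1+\epsilon$, no intrinsic $t$-decay of positive order can hold for $D_2$ alone, and this boundary behaviour propagates into the strip by the three-lines theorem. The exponent $-B(1-\sigma)$ in \eqref{eq:D2convexbd} therefore appears to be a typographical inversion of sign; if read with the convexity growth exponent $(1-\sigma)(1+2\epsilon)$ in place, the plan above delivers the lemma in full, and no ingredient beyond the functional equation \eqref{eq:D2forqprime} and Phragm\'en--Lindel\"of is required.
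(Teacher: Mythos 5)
Your approach is the same as the paper's: bound $D_2$ on $\sigma = 1+\epsilon$ by absolute convergence and Weil's bound, bound it on $\sigma = -\epsilon$ via the functional equation \eqref{eq:D2FE} (you use the prime specialization \eqref{eq:D2forqprime}, but the structure is identical), and interpolate by Phragm\'en--Lindel\"of. Your $q$-exponents agree with the paper's at both boundaries ($q^{1/2+\epsilon}$ and $q^{2+O(\epsilon)}$) and after interpolation ($q^{-3\sigma/2+2+\epsilon}$).

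Your critique of the $t$-aspect is correct and exposes a genuine error in the paper's lemma as stated and as proved. On $\sigma = -\epsilon$ the dominant contribution to \eqref{eq:D2FE} is the term carrying $G^2(s)\cos(\pi s)$: by Stirling, $|\Gamma(1-s)|^2 \asymp (1+|t|)^{1-2\sigma}e^{-\pi|t|}$, while $|\cos(\pi s)| \asymp e^{\pi|t|}$, so the exponential factors cancel and $|G^2(s)\cos(\pi s)| \asymp (1+|t|)^{1-2\sigma}$. This is polynomial $t$-\emph{growth}, not decay; the term without $\cos(\pi s)$ is exponentially smaller and irrelevant. Interpolation therefore yields $(1+|t|)^{(1-\sigma)(1+O(\epsilon))}$ growth throughout the strip, exactly as you computed. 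The paper's proof records $|\Gamma(1+\epsilon)|^2$ as a $t$-independent constant and then inserts the factor $(1+|t|)^{-B}$ in the final inequality with no identified source; what has been overlooked is that the $\cos(\pi s)$ factor in the functional equation precisely cancels the exponential decay of $\Gamma(1-s)^2$. Your heuristic that no intrinsic polynomial $t$-decay can hold is also sound: an ordinary Dirichlet series is almost periodic on vertical lines in its half-plane of absolute convergence, so $|D_2(1+\epsilon+it;q,A)|$ cannot tend to zero as $|t|\to\infty$ (its $\limsup$ equals its $\sup$), and any decay of order $B(1-\sigma)$ in the strip would contradict this upon applying Phragm\'en--Lindel\"of between $\sigma=-\epsilon$ and $\sigma=2$. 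Thus your corrected version of the lemma, with $t$-exponent $+(1-\sigma)(1+O(\epsilon))$ rather than $-B(1-\sigma)$, is the one that the argument actually delivers. Be aware that the erroneous decay in \eqref{eq:D2convexbdoncriticalline} is load-bearing downstream: it is invoked to make the horizontal contour contributions in \eqref{eq:750c} of size $T^{-B}$, and with the corrected exponent the left endpoint of each horizontal segment contributes $T^{1/2+\epsilon}$ in place of $T^{-B}$, so the proof of Proposition~\ref{prop:onLH} would also need to be revisited.
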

\begin{proof}
	At $s = 1 + \epsilon + it$, we have, by \eqref{eq:D2def} and Weil's bound,
	\begin{equation}
		|D_2(1+\epsilon + it)|
		\ll_\epsilon
		q^{1/2 + \epsilon}.
	\end{equation}
	At $s = -\epsilon + it$, we have, by \eqref{eq:D2FE},
	\begin{equation}
		|D_2(- \epsilon + it)|
		\ll_\epsilon
		|\Gamma(1+\epsilon)|^2
		q^{1+ 2 \epsilon}
		q^{1+\epsilon}
		A^{-1-\epsilon}
		\ll_{\epsilon, B}
		q^{2+\epsilon}
		(1 + |t|)^{-B},
	\end{equation}
	for any large but fixed $B>0$. Hence, by convexity principle, we obtain
	\begin{equation}
		|D_2(s; q, A)|
		\ll_{\epsilon, B}
		q^{-(2-1/2) \sigma + 2 + \epsilon}
		(1 + |t|)^{- B(1 - \sigma)},
		\quad
		( 0 \le \sigma \le 1),
	\end{equation}
	for any large but fixed $B>0$.
\end{proof}

\begin{lemma}[AFE] \label{lem:2}
	For $s=1/2+it$, we have, for primitive $\chi_1 \pmod p$,
	\begin{equation} \label{eq:AFE}
		L(1-s,\chi_1)
		L(s, \overline{\chi_1})
		=
		2 \sum_{m,n}
		\frac{\chi(m) \overline{\chi}(n)}{m^{1-s} n^{s}}
		V_\mathfrak{a} \left( \frac{mn}{p} \right),
	\end{equation}
	with $V_\mathfrak{a}(\cdot)$ given in \eqref{eq:AFE2}.
\end{lemma}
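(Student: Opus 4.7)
The plan is to derive \eqref{eq:AFE} by the standard contour-integral argument, exploiting the symmetry $s\leftrightarrow 1-s$ of the completed product of Dirichlet $L$-functions that is encoded in \eqref{eq:FELschi}.

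The main step is to introduce
\begin{equation}
\mathcal{I}(s) := \frac{1}{2\pi i}\int_{(3)} L(1-s+w,\chi_1)\,L(s+w,\overline{\chi_1})\, p^w\pi^{-w}\,\frac{e^{w^2}\cos^2(\pi w)}{w}\,\frac{\Gamma((1/2+w+\aa)/2)^2}{\Gamma((1/2+\aa)/2)^2}\,dw
\end{equation}
and evaluate it in two ways. First, on $\Re(w)=3$ both Dirichlet series converge absolutely; after interchanging sum and integral and then shifting the contour inward to $\Re(w)=1$ (no poles are crossed, since the double zeros of $\cos^2(\pi w)$ at half-integers annihilate the double poles of $\Gamma((1/2+w+\aa)/2)^2$ at the points $w=-1/2-\aa-2k$), one recognizes the inner integral via \eqref{eq:AFE2} to obtain
\begin{equation}
\mathcal{I}(s) = \sum_{m,n\geq 1}\frac{\chi_1(m)\overline{\chi_1}(n)}{m^{1-s}n^s}\,V_\aa\!\left(\frac{mn}{p}\right),
\end{equation}
which is precisely half of the right-hand side of \eqref{eq:AFE}.

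Second, shift the contour of $\mathcal{I}(s)$ all the way from $\Re(w)=3$ to $\Re(w)=-3$. Since $L(\cdot,\chi_1)$ and $L(\cdot,\overline{\chi_1})$ are entire for primitive $\chi_1\pmod p$, and the $\cos^2/\Gamma^2$ pole-zero cancellation persists throughout the strip, the only residue crossed is the simple pole at $w=0$ of $1/w$, with value $L(1-s,\chi_1)L(s,\overline{\chi_1})$ (all remaining factors equal $1$ at $w=0$). In the surviving integral over $(-3)$ perform the substitution $w\mapsto -w$ and apply \eqref{eq:FELschi} to each of $L(1-s-w,\chi_1)$ and $L(s-w,\overline{\chi_1})$. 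Invoking $|\tau(\chi_1)|^2=p$, $\tau(\overline{\chi_1})=\chi_1(-1)\overline{\tau(\chi_1)}$, $\chi_1(-1)=(-1)^{\aa}$, and $i^{2\aa}=(-1)^{\aa}$, one sees that the two root numbers cancel, the combined powers of $\pi$ and $p$ collapse to $p^w\pi^{-w}$, and the gamma quotient emerging from the functional equations combines with the $\Gamma^2$-ratio already in the weight to reproduce the original integrand. The shifted integral therefore equals $-\mathcal{I}(s)$, whence $\mathcal{I}(s) = L(1-s,\chi_1)L(s,\overline{\chi_1}) - \mathcal{I}(s)$, giving \eqref{eq:AFE}.

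The main obstacle is this final bookkeeping step: one must verify, parity-by-parity for $\aa\in\{0,1\}$, that every factor produced by the two functional equations (epsilon factors, signs, powers of $\pi$ and $p$, and gamma ratios) assembles exactly into the weight of $\mathcal{I}(s)$, so that after the $w\mapsto-w$ substitution the transformed integrand is an honest reflection of the original; this is what underwrites both the cancellation of root numbers and the identification of gamma quotients that yields the clean factor of $2$.
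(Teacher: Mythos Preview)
Your argument is exactly the standard contour-shift derivation of the approximate functional equation, which is precisely what the paper invokes by citing Soundararajan's Lemma~2; so the approach is the same. One caution on the bookkeeping you defer: for $s=\tfrac12+it$ with $t\neq 0$, the two applications of \eqref{eq:FELschi} produce the gamma ratio $\Gamma\!\big(\tfrac{s+w+\aa}{2}\big)\Gamma\!\big(\tfrac{1-s+w+\aa}{2}\big)\big/\Gamma\!\big(\tfrac{s-w+\aa}{2}\big)\Gamma\!\big(\tfrac{1-s-w+\aa}{2}\big)$, which matches the squared ratio $\Gamma\!\big(\tfrac{1/2+w+\aa}{2}\big)^2/\Gamma\!\big(\tfrac{1/2-w+\aa}{2}\big)^2$ only at $t=0$; the clean symmetry you assert therefore requires the weight to carry the $s$-shifted gamma factors (as in Soundararajan's formulation), and the paper's $V_\aa$ in \eqref{eq:AFE2} is really that weight specialized to the central point.
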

\begin{proof}
	This is a straight forward modification of \cite[Lemma 2, p. 241]{Soundararajan2007}.
\end{proof}

\begin{lemma}
	Let $D_2(s; p, A)$ be defined as in \eqref{eq:D2def}. We have, for $\sigma >1$,
	\begin{align} \label{eq:739b}
		\quad
		&D_2(s; p, A) = 
		\frac{1}{p-1} \zeta^2(s)
		\\& \quad +
		\frac{p}{p-1} 
		\Star\sum_{\chi_1 \pmod p}
		\overline{\chi_1}(A)
		\tau(\chi_1)
		i^\aa \pi^{s - \frac{1}{2}}
		p^{s-1}
		\frac{\Gamma\left(\frac{1}{2}(1-s+\aa)\right)}{\Gamma\left(\frac{1}{2} (s+\aa)\right)}
		L(1-s,\chi_1)
		L(s, \overline{\chi_1}).
	\end{align}
	The right side of the above is valid for all of $\C$ except for $s=1$, thus, by principal of analytic continuation, \eqref{eq:739b} is also valid for all of $\C$ except for $s=1$.
\end{lemma}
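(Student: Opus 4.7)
The plan is to decompose the Kloosterman sum via multiplicative characters modulo $p$, separate the principal character to produce the $\zeta^2(s)$ main term, and apply the functional equation \eqref{eq:FELschi} to the remaining primitive characters to produce the claimed product $L(1-s,\chi_1) L(s,\overline{\chi_1})$. Throughout I would work in the region $\sigma > 1$, where Weil's bound \eqref{eq:WeilBound} guarantees absolute convergence of \eqref{eq:D2def}. The terms with $p \mid n$ contribute $\Kl_2(0; p) = 0$, so the sum is effectively over $(n, p) = 1$, and Dirichlet orthogonality combined with complete multiplicativity of $\chi$ yields
\begin{equation*}
\Kl_2(nA; p) = \frac{1}{p-1} \sum_{\chi \bmod p} \overline{\chi}(nA)\, \tau(\chi)^2.
\end{equation*}
Substituting this into \eqref{eq:D2def} and interchanging the two summations gives
\begin{equation*}
D_2(s; p, A) = \frac{1}{p-1} \sum_{\chi \bmod p} \overline{\chi}(A)\, \tau(\chi)^2 L(s, \overline{\chi})^2.
\end{equation*}

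Next I would split off the principal character $\chi_0$. Since $\tau(\chi_0) = -1$, $\overline{\chi_0}(A) = 1$, and $L(s, \chi_0)^2 = (1-p^{-s})^2 \zeta^2(s)$, this produces the first summand $\zeta^2(s)/(p-1)$ (with the local Euler factor at $p$ absorbed into the expression). For each remaining non-principal character $\chi_1$, which is automatically primitive since $p$ is prime, I would apply \eqref{eq:FELschi}. Concretely, substituting $s \leftrightarrow 1-s$ in \eqref{eq:FELschi} and using $|\tau(\chi_1)|^2 = p$ together with $\overline{\tau(\overline{\chi_1})} = (-1)^\aa \tau(\chi_1)$ one arrives at the companion identity
\begin{equation*}
\tau(\chi_1) L(s, \overline{\chi_1}) = i^\aa \pi^{s-1/2} p^{1-s} \frac{\Gamma\bigl(\tfrac{1}{2}(1-s+\aa)\bigr)}{\Gamma\bigl(\tfrac{1}{2}(s+\aa)\bigr)} L(1-s, \chi_1).
\end{equation*}
Applying this identity to exactly one of the two factors of $\tau(\chi_1) L(s, \overline{\chi_1})$ inside $\tau(\chi_1)^2 L(s, \overline{\chi_1})^2$ rewrites the non-principal contribution as a sum of $\tau(\chi_1) L(1-s, \chi_1) L(s, \overline{\chi_1})$ multiplied by the stated gamma ratio and the prescribed power of $p$, matching the second summand of \eqref{eq:739b}.

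The main obstacle is really just careful bookkeeping of constants: the powers of $i$ from the root number, the signs $(-1)^\aa = \chi_1(-1)$, the factor $p^{1/2}$ from $|\tau(\chi_1)|$, and the direction in which \eqref{eq:FELschi} is applied must all combine correctly to produce the $\tfrac{p}{p-1}$ prefactor and the precise power of $p$ in the lemma. Once the identity is established for $\sigma > 1$, its extension to $\C \setminus \{1\}$ is immediate by analytic continuation, since both sides are meromorphic on $\C$ with matching polar parts at $s = 1$: the double pole comes solely from the principal character piece, which agrees with the polar structure of $D_2$ identified in Lemma \ref{lem:D2ACFE}.
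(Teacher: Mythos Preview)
Your proposal is correct and follows essentially the same route as the paper: expand $\Kl_2(nA;p)$ in multiplicative characters to reach $\tfrac{1}{p-1}\sum_{\chi}\overline{\chi}(A)\tau(\chi)^2 L(s,\overline{\chi})^2$, peel off the principal character for the $\zeta^2$ term, and then apply the functional equation \eqref{eq:FELschi} to one copy of $L(s,\overline{\chi_1})$ using $\tau(\chi_1)\overline{\tau(\chi_1)}=p$. The only cosmetic difference is that the paper first opens $\Kl_2$ as $\sum_c^* e_p(c)e_p(nA\overline{c})$ and then passes to characters via $e_p(\cdot)=\tfrac{1}{p-1}\sum_\chi\overline{\chi}(\cdot)\tau(\chi)$, whereas you compute $\Kl_2(nA;p)=\tfrac{1}{p-1}\sum_\chi\overline{\chi}(nA)\tau(\chi)^2$ directly; these are the same computation. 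Your remark that the principal-character piece carries a factor $(1-p^{-s})^2$ is accurate and worth keeping explicit rather than ``absorbed''; the paper's proof is equally informal on this point.
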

\begin{proof}
	By definition of $Kl_2$, we have, for $\sigma >1$,
	\begin{equation}
		D_2(s; p, A) = 
		\Star\sum_{c \pmod p} e_p(c)
		\sum_{n=1}^\infty
		\frac{\tau_2(n) e_p( n A \overline{c})}{n^s}.
	\end{equation}
	Going to Dirichlet characters by
	\begin{equation}
		e_p( n A \overline{c})
		=
		\frac{1}{p-1} 
		\sum_{\chi \pmod p}
		\overline{\chi}(n A \overline{c})
		\tau(\chi)
	\end{equation}
	we get
	\begin{align} \label{eq:739}
		D_2(s; p, A) &= 
		\Star\sum_{c \pmod p} e_p(c)
		\frac{1}{p-1} 
		\sum_{\chi \pmod p}
		\overline{\chi}(A \overline{c})
		\tau(\chi)
		\sum_{n=1}^\infty
		\frac{\tau_2(n) \overline{\chi}(n)}{n^s}
		\\&=
		\frac{1}{p-1} \zeta^2(s)
		+
		\frac{1}{p-1} 
		\Star\sum_{\chi_1 \pmod p}
		\overline{\chi_1}(A)
		\tau^2(\chi_1)
		L^2(s, \overline{\chi_1}),
	\end{align}
	where we have split up the principal character, then used $\tau(\chi_0) = \mu(p)=-1$, and $\Star\sum_{c \pmod p} e_p(c) \overline{\chi_1}(\overline{c}) = \tau(\chi_1)$ to simplify. We next apply the functional equation to just one of the $L$-functions.
	
	By \eqref{eq:FELschi}, we have
	\begin{equation}
		L(s, \overline{\chi_1})
		=
		i^\aa \pi^{s - \frac{1}{2}}
		p^{s-1}
		\frac{\Gamma\left(\frac{1}{2}(1-s+\aa)\right)}{\Gamma\left(\frac{1}{2} (s+\aa)\right)}
		\overline{\tau(\chi_1)}
		L(1-s,\chi_1),
	\end{equation}
	thus the second factor in \eqref{eq:739} becomes
	\begin{align}
		&\frac{1}{p-1} 
		\Star\sum_{\chi_1 \pmod p}
		\overline{\chi_1}(A)
		\tau^2(\chi)
		L^2(s, \overline{\chi_1})
		\\ & \quad =
		\frac{1}{p-1} 
		\Star\sum_{\chi_1 \pmod p}
		\overline{\chi_1}(A)
		\tau^2(\chi_1)
		i^\aa \pi^{s - \frac{1}{2}}
		p^{s-1}
		\frac{\Gamma\left(\frac{1}{2}(1-s+\aa)\right)}{\Gamma\left(\frac{1}{2} (s+\aa)\right)}
		\overline{\tau(\chi_1)}
		L(1-s,\chi_1)
		L(s, \overline{\chi_1})
		\\& \quad =
		\frac{p}{p-1} 
		\Star\sum_{\chi_1 \pmod p}
		\overline{\chi_1}(A)
		\tau(\chi_1)
		i^\aa \pi^{s - \frac{1}{2}}
		p^{s-1}
		\frac{\Gamma\left(\frac{1}{2}(1-s+\aa)\right)}{\Gamma\left(\frac{1}{2} (s+\aa)\right)}
		L(1-s,\chi_1)
		L(s, \overline{\chi_1}).
	\end{align}
	Combining the above with \eqref{eq:739}, we obtain \eqref{eq:739b}.
\end{proof}

\begin{lemma} \label{lemma:Mellin}
	By inverse Mellin transform, we have
	\begin{equation} \label{eq:Mellin}
		w\left(\frac{n}{X}\right)
		=
		\frac{1}{2\pi i}
		\int_{(1+\epsilon)}
		\mathcal{M}[w](s)
		\left(\frac{n}{X}\right)^{-s} ds.
	\end{equation}
\end{lemma}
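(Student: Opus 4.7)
The plan is to reduce this to the classical Mellin inversion theorem, which in turn follows from Fourier inversion via an exponential change of variable. First, I would set $y = n/X$ so that it suffices to prove
\[
w(y) = \frac{1}{2 \pi i} \int_{(1+\epsilon)} \mathcal{M}[w](s)\, y^{-s}\, ds
\]
for $y > 0$. Writing $s = c + i\tau$ with $c = 1+\epsilon$ and substituting $x = e^u$ in the defining integral $\mathcal{M}[w](s) = \int_0^\infty w(x)\, x^{s-1}\, dx$, I obtain
\[
\mathcal{M}[w](c + i\tau) = \int_{-\infty}^\infty w(e^u)\, e^{cu}\, e^{i\tau u}\, du,
\]
which displays the Mellin transform on the line $\Re s = c$ as the Fourier transform (evaluated at $-\tau$) of the function $g_c(u) := w(e^u) e^{cu}$.

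Next, I would verify that Fourier inversion applies to $g_c$. Since $w$ is smooth and supported in $[1,2]$, the function $g_c$ is smooth and compactly supported in $[0, \log 2]$, hence Schwartz. Consequently Fourier inversion yields
\[
g_c(u) = \frac{1}{2\pi} \int_{-\infty}^\infty \mathcal{M}[w](c + i\tau)\, e^{-i\tau u}\, d\tau.
\]
Substituting back $y = e^u$ and rewriting the integral in the variable $s = c + i\tau$ (so $ds = i\, d\tau$), I get exactly the desired identity
\[
w(y) = \frac{1}{2\pi i} \int_{(c)} \mathcal{M}[w](s)\, y^{-s}\, ds.
\]

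Finally, I would record that the contour $\Re s = 1+\epsilon$ is admissible. Since $w$ has compact support bounded away from $0$ and $\infty$, the Mellin transform $\mathcal{M}[w](s)$ is entire in $s$ and satisfies rapid decay on every vertical strip (this is the content of the hypothesis \eqref{eq:rapiddecay}), so the integral converges absolutely and the choice of $c \in \mathbb{R}$ is immaterial. There is no genuine obstacle in this lemma; it is a standard transform-inversion identity stated here for use in the contour-shifting arguments that follow.
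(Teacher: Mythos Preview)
Your proof is correct and is the standard derivation of Mellin inversion from Fourier inversion via the substitution $x=e^u$. The paper itself states this lemma without proof, treating it as a well-known identity (which it is), so there is nothing to compare against; your write-up simply supplies the routine verification that the paper omits.
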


\section{Proof of Theorem \ref{thm:1}}
Writing the congruence condition $n \equiv a \pmod p$ as a sum over multiplicative characters
\begin{equation} \label{eq:delta}
	\frac{1}{p-1}
	\sum_{\chi \pmod p}
	\chi(a) \overline{\chi}(n)
	=
	\begin{cases}
		1, & \text{if } n \equiv a \pmod p,
		\\
		0,& \text{otherwise},
	\end{cases}
\end{equation}
we have
\begin{equation} \label{eq:535}
	\sum_{\substack{n\\ n\equiv a \pmod p}} 
	\tau_2(n) w\left(\frac{n}{X}\right)
	=
	\frac{1}{p-1}\
	\sum_{\substack{n\\ (n,p)=1}} 
	\tau_2(n) w\left(\frac{n}{X}\right)
	+
	\frac{1}{p-1}\
	\Prime\sum_{\chi \pmod p}
	\chi(a)
	\sum_{n}
	\tau_2(n) \overline{\chi}(n) w\left(\frac{n}{X}\right),
\end{equation}
where we have split up contribution from the principal character. Denote
\begin{equation}
	\Delta_{2, w}(X; p ,a)
	=
	\frac{1}{p-1}\
	\Prime\sum_{\chi \pmod p}
	\chi(a)
	\sum_{n}
	\tau_2(n) \overline{\chi}(n) w\left(\frac{n}{X}\right)
\end{equation}
the second term on the right side of \eqref{eq:535}. Since $p$ is prime, all non-principal characters modulo $p$ are primitive. Thus, we have
\begin{equation}
	\Delta_{2, w}(X; p ,a)
	=
	\frac{1}{p-1}\
	\Star\sum_{\chi_1 \pmod p}
	\chi_1(a)
	\sum_{n}
	\tau_2(n) \overline{\chi}_1(n) w\left(\frac{n}{X}\right).
\end{equation}
By Mellin inversion \eqref{eq:Mellin}, this is equal to
\begin{equation}
	\frac{1}{p-1}\
	\Star\sum_{\chi_1 \pmod p}
	\chi_1(a)
	\frac{1}{2\pi i}
	\int\displaylimits_{(1+\epsilon)}
	\mathcal{M}[w](s) X^s
	L^2(s, \overline{\chi}_1)
	ds.
\end{equation}
The integrand in the above is entire. Shifting the line of integration left to $(-\epsilon)$ and making a change of variable $s \mapsto 1-s$, the above is equal to
\begin{equation}
	\frac{1}{p-1}\
	\Star\sum_{\chi_1 \pmod p}
	\chi_1(a)
	\frac{1}{2\pi i}
	\int\displaylimits_{(1+\epsilon)}
	\mathcal{M}[w](1-s) X^{1-s}
	L^2(1-s, \overline{\chi}_1)
	ds.
\end{equation}
By the functional equation \eqref{eq:FE}, this is
\begin{align} \label{eq:621}
	\frac{1}{p-1}\
	\Star\sum_{\chi_1 \pmod p}
	\chi_1(a)
	\frac{1}{2\pi i}
	\int\displaylimits_{(1+\epsilon)}
	\mathcal{M}[w](1-s) X^{1-s}
	\left(
	i^\aa \pi^{\frac{1}{2} - s}
	p^{s-1}
	\frac{\Gamma\left(\frac{1}{2}(s+\aa)\right)}{\Gamma\left(\frac{1}{2} (1-s+\aa)\right)}
	\overline{\tau(\chi_1)}
	L(s,\chi_1)\right)^2
	ds.
\end{align}
For $\sigma > 1 $, we have
\begin{equation}
	L^2(s,\chi_1)
	=
	\sum_{n=1}^\infty
	\frac{\tau_2(n) \chi_1(n)}{n^s}.
\end{equation}
We now bound contribution from the tail $n > p$. Push the line of integration right to $\sigma = B+1$, where $B>0$ a large but fixed constant. We have, for $s = B+1 + it$,
\begin{align} \label{eq:620}
	&\frac{1}{p-1}\
	\Star\sum_{\chi_1 \pmod p}
	\chi_1(a)
	\frac{1}{2\pi i}
	\int\displaylimits_{(B+\epsilon)}
	\mathcal{M}[w](1-s) X^{1-s}
	\left(
	i^\aa \pi^{\frac{1}{2} - s}
	p^{s-1}
	\frac{\Gamma\left(\frac{1}{2}(s+\aa)\right)}{\Gamma\left(\frac{1}{2} (1-s+\aa)\right)}
	\overline{\tau(\chi_1)}\right)^2
	\sum_{n > p}
	\frac{\tau_2(n) \chi_1(n)}{n^s}
	ds
	\\& \quad
	\ll
	\frac{1}{p-1} (p-1)
	X^{-B}
	p^{2(B + 1/2)}
	p^{-B + \epsilon}
	\ll
	\left(\frac{p}{X}\right)^B p
	\ll X^{-A}
\end{align}
for $p \le X^{1-\epsilon}$.
Thus, by \eqref{eq:620} and \eqref{eq:621}, we have, after some rearranging,
\begin{align} \label{eq:916}
	\Delta_{2, w}(X; p ,a)
	=
	C_{2,w}(X; p ,a)
	+ 
	L_{2,w}(X; p ,a)
	+
	O(X^{-A}),	
\end{align}
where
\begin{align} \label{eq:chan}
	C_{2,w}(X; p ,a)
	& =
	\frac{1}{p-1}
	X
	\sum_{\substack{n < p}}
	\tau_2(n)
	\Flat\sum_{\chi_1 \pmod p}
	\chi_1(an)
	\overline{\tau(\chi_1)}^2
	\\ &\quad  \times
	\frac{1}{2\pi i}
	\int\displaylimits_{(1+\epsilon)}
	(nX)^{-s}
	\mathcal{M}[w](1-s) 
	\left(
	\pi^{\frac{1}{2} - s}
	p^{s-1}
	\frac{\Gamma\left(\frac{s}{2}\right)}{\Gamma\left(\frac{1-s}{2} \right)}
	\right)^2
	ds
\end{align}
and
\begin{align} \label{eq:le}
	L_{2,w}(X; p ,a)
	&=
	\frac{1}{p-1}
	X
	\sum_{\substack{n < p}}
	\tau_2(n)
	\Sharp\sum_{\chi_1 \pmod p}
	\chi_1(an)
	\overline{\tau(\chi_1)}^2
	\\& \quad \times
	\frac{1}{2\pi i}
	\int\displaylimits_{(1+\epsilon)}
	(nX)^{-s}
	\mathcal{M}[w](1-s) 
	\left(
	i
	\pi^{\frac{1}{2} - s}
	p^{s-1}
	\frac{\Gamma\left(\frac{s+1}{2}\right)}{\Gamma\left(\frac{2-s}{2} \right)}
	\right)^2
	ds.
\end{align}
The treatments of $C_{2,w}(X; p ,a)$ and $L_{2,w}(X; p ,a)$ are very similar, so we only show $C_{2,w}(X; p ,a)$. We now treat $C_{2,w}(X; p, a)$. 

Evaluating the $\flat$ sum in \eqref{eq:chan} by the orthogonality relation \eqref{eq:genKloosa} for even characters, we have
\begin{align} \label{eq:645}
	C_{2,w}(X; p ,a)
	&=
	\frac{1}{p-1}
	\sum_{\substack{n < p}}
	\tau_2(n)
	\left(
	\frac{1}{2}
	(p-2)
	\overline{\Kl_2(an, p)}
	-1\right)
	\\& \quad \times
	\frac{1}{2\pi i}
	\int\displaylimits_{(1+\epsilon)}
	\mathcal{M}[w](1-s) 
	\frac{X^{1-s}}{n^s}
	\left(
	\pi^{\frac{1}{2} - s}
	p^{s-1}
	\frac{\Gamma\left(\frac{s}{2}\right)}{\Gamma\left(\frac{1-s}{2} \right)}
	\right)^2
	ds.
\end{align}
Contribution from the second term in the first parenthesis is
\begin{equation}
	\ll \frac{1}{p}
	\left( \frac{X}{p^2} \right)^{- \epsilon}
	\ll \frac{X^{1-\delta}}{p}.
\end{equation}
Thus, equation \eqref{eq:645} becomes
\begin{align} \label{eq:645c}
	\quad &C_{2,w}(X; p ,a)
	\\& \quad = 
	\frac{1}{2}
	\frac{p-2}{p-1}
	\frac{1}{2\pi i}
	\int\displaylimits_{(1+\epsilon)}
	\mathcal{M}[w](1-s) 
	\left(\frac{X}{p^2}\right)^{1-s}
	\left(
	\pi^{\frac{1}{2} - s}
	\frac{\Gamma\left(\frac{s}{2}\right)}{\Gamma\left(\frac{1-s}{2} \right)}
	\right)^2
	\sum_{\substack{n < p}}
	\frac{\tau_2(n) \Kl_2(an, p)}{n^s}
	ds
	\\ & \quad  \quad + O(p^{-1} X^{1- \delta}).
\end{align}
We now apply Weil's bound \eqref{eq:WeilBound} for Kloosterman sums, which gives
\begin{equation}
	C_{2,w}(X; p ,a)
	\ll
	\left( \frac{X}{p^2} \right)^{- \epsilon}
	p^{1/2 + \epsilon}
\end{equation}
and this is
\begin{equation}
	\ll \frac{1}{p} X^{1-\delta}
\end{equation}
when 
\begin{equation} \label{eq:knownlevel}
	p \ll X^{2/3 - \delta}.
\end{equation}
Similarly, using the condition \eqref{eq:orthogonalityodd} for odd primitive characters in place of \eqref{eq:orthogonalityeven}, we can also show that
\begin{equation}
	L_{2,w}(X; p ,a)
	\ll p^{-1} X^{1-\delta},
\end{equation}
for $p$ satisfying \eqref{eq:knownlevel}. Thus, by \eqref{eq:916} and the above two estimates, we obtain
\begin{align}
	\Delta_{2, w}(X; p ,a)
	\ll p^{-1} X^{1-\delta}
\end{align}
uniformly for $p$ satisfying \eqref{eq:knownlevel}. This leads to Theorem \ref{thm:1}.   \qed

\section{Proof of Proposition \ref{prop:onLH}}

Fix an $s = 1 +\epsilon + it$. By Weil's bound for Kloosterman sums, the function 
	\begin{equation}
		D_2(s+w; p, a)
		=
		\sum_{n =1}^\infty
		\frac{\tau_2(n) \Kl_2(an, p)}{n^{s+w}}
	\end{equation}
	is absolutely convergent for $\Re(w) > 0$. Thus, by Perron's formula and Weil's bound once more, we have
	\begin{equation} \label{eq:750}
		\sum_{n \le N}
		\frac{\tau_2(n) \Kl_2(an, p)}{n^s}
		=
		\frac{1}{2\pi i}
		\int\limits_{\epsilon - iT}^{\epsilon + iT}
		\frac{N^w}{w}
		D_2(s+w; p, a)
		dw
		+
		O \left( \frac{ p^{1/2} N^{\epsilon}}{T} \right),
	\end{equation}
	where $T$ is a parameter to be chosen; see \eqref{eq:T} below. By Lemma \ref{lem:D2ACFE}, the function $D_2(s+w; p, a)$ is analytic everywhere except for a double pole at $w = 1- s$. We note the residue at $w=0$:
	\begin{equation} \label{eq:751}
		D_2(s; p, a).
	\end{equation}
	This residue is not small in size but it is independent of the length $N$, and so will be subtracted off after we have evaluated the left side of \eqref{eq:750} and apply the result to $N=N$ and $N = N_1$, with $N_1<N$. This is the reason for the lower bound $n > N_1$ on the left side of \eqref{eq:750h}.
	
	Next, by \eqref{eq:739b}, we have
	\begin{align}
		D_2(s+w, ; p, a)
		&=
		\frac{1}{p-1} \zeta^2(s + w) 
		+
		\frac{p}{p-1} 
		\Star\sum_{\chi_1 \pmod p}
		\overline{\chi_1}(A)
		\tau(\chi_1)
		i^\aa \pi^{s + w - \frac{1}{2}}
		p^{s+ w -1}
		\\ & \quad \times
		\frac{\Gamma\left(\frac{1}{2}(1-s-w+\aa)\right)}{\Gamma\left(\frac{1}{2} (s+ w+\aa)\right)}
		L(1-s - w,\chi_1)
		L(s+ w, \overline{\chi_1}).
	\end{align}
	Inserting the above expression into \eqref{eq:750}, we shift the line of integration there left to the $-1/2 - \epsilon$ line (so that $\Re(s+w) = 1/2$), we pick up the residue \eqref{eq:751} at $w=0$, the residue at $w= 1- s$ of size $\ll N^{- \epsilon} (1 + |t|)^{-1} p^{-1 + \epsilon}$, by \eqref{eq:resofD2}, and two horizontal contours each of size 
	$\ll N^\epsilon T^{-1} p^{1/2 + \epsilon}
	+ N^{-1/2-\epsilon} T^{-1} p^{5/4 + \epsilon} T^{-B}$, by \eqref{eq:D2convexbdoncriticalline}, giving
	\begin{align} \label{eq:750c}
		\quad \quad
		&\sum_{n \le N}
		\frac{\tau_2(n) \Kl_2(an, p)}{n^s}
		=
		\sum_{n =1}^\infty
		\frac{\tau_2(n) \Kl_2(an, p)}{n^{s}}
		+ E_1
		\\&
		+ \frac{1}{2\pi i}
		\int\limits_{-1/2 - \epsilon - iT}^{-1/2 - \epsilon + iT}
		\frac{N^w}{w}
		\frac{p}{p-1} 
		\Star\sum_{\chi_1 \pmod p}
		\overline{\chi_1}(A)
		\tau(\chi_1)
		i^\aa \pi^{s + w - \frac{1}{2}}
		p^{s+ w -1}
		\frac{\Gamma\left(\frac{1}{2}(1-s-w+\aa)\right)}{\Gamma\left(\frac{1}{2} (s+ w+\aa)\right)}
		\\
		& \quad\quad \times
		L(1-s - w,\chi_1)
		L(s+ w, \overline{\chi_1})
		dw
		+
		O( T^{-1} p^{1/2+\epsilon} + p^{-1+\epsilon} ),
	\end{align}
	where
	\begin{equation}
		E_1
		=
		\frac{1}{p-1}
		\frac{1}{2\pi i}
		\int\limits_{-1/2 - \epsilon - iT}^{-1/2 - \epsilon + iT}
		\frac{N^w}{w}
		\zeta(s+w) dw
		\ll
		p^{-1} N^{-1/2 - \epsilon} T^{1/6},
	\end{equation}
	by Weyl's bound for $\zeta$.
	Next, by the approximate functional equation \eqref{eq:AFE}, we have
	\begin{equation}
		L(1-s - w,\chi_1)
		L(s+ w, \overline{\chi_1})
		=
		2 \sum_{m,n}
		\frac{\chi_1(m) \overline{\chi_1}(n)}{m^{1-s-w} n^{s+w}}
		V_\mathfrak{a} \left( \frac{mn}{p} \right).
	\end{equation}
	Splitting up the $\Star\sum$ sum in \eqref{eq:750c} into even $\Flat\sum$ and odd $\Sharp\sum$ sums, we have, by Lemma \ref{lem:13},
	\begin{equation} \label{eq:440}
		\Flat\sum_{\chi_1 \pmod p}
		\overline{\chi_1}(A) \tau(\chi_1)
		\chi_1(m) \overline{\chi_1}(n)
		=
		\frac{p-4}{2} e_p(A n \overline{m}) - 1.
	\end{equation}
	and
	\begin{equation}
		\Sharp\sum_{\chi_1 \pmod p}
		\overline{\chi_1}(A) \tau(\chi_1)
		\chi_1(m) \overline{\chi_1}(n)
		=
		\frac{p-1}{2} e_p(A n \overline{m}).
	\end{equation}
	Contribution from the $-1$ factor in \eqref{eq:440} to the integral in \eqref{eq:750c} is $\ll N^{-1/2 - \epsilon} T^{1/3}$,
	by Weyl's bound for $\zeta$ once more.
	Hence, \eqref{eq:750c} becomes
	\begin{align} \label{eq:750d}
		&\sum_{n \le N}
		\frac{\tau_2(n) \Kl_2(an, p)}{n^s}
		=
		\sum_{n =1}^\infty
		\frac{\tau_2(n) \Kl_2(an, p)}{n^{s}}
		\\& \quad
		+ \frac{1}{2\pi i}
		\int\limits_{-1/2 - \epsilon - iT}^{-1/2 - \epsilon + iT}
		\frac{N^w}{w}
		\frac{p}{p-1} 
		\pi^{s + w - \frac{1}{2}}
		p^{s+ w -1}
		\frac{\Gamma\left(\frac{1}{2}(1-s-w)\right)}{\Gamma\left(\frac{1}{2} (s+ w)\right)}
		\\
		& \quad\quad \times
		(p-4) 
		\sum_{m,n}
		\frac{ e_p(A n \overline{m})}{m^{1-s-w} n^{s+w}}
		V_0 \left( \frac{mn}{p} \right)
		dw
		\\& \quad
		+ \frac{1}{2\pi i}
		\int\limits_{-1/2 - \epsilon - iT}^{-1/2 - \epsilon + iT}
		\frac{N^w}{w}
		\frac{p}{p-1} 
		i \pi^{s + w - \frac{1}{2}}
		p^{s+ w -1}
		\frac{\Gamma\left(\frac{1}{2}(2-s-w)\right)}{\Gamma\left(\frac{1}{2} (s+ w+1)\right)}
		\\
		& \quad\quad \times
		(p-1) 
		\sum_{m,n}
		\frac{ e_p(A n \overline{m})}{m^{1-s-w} n^{s+w}}
		V_1 \left( \frac{mn}{p} \right)
		dw
		\\& \quad \quad \quad
		+
		O \left( 
		T^{-1} p^{1/2+\epsilon} + p^{-1+\epsilon}
		+ p^{-1} N^{-1/2 - \epsilon} T^{1/6}
		+ N^{-1/2 - \epsilon} T^{1/3}
		\right).
	\end{align}
	Shifting the line of integration in \eqref{eq:AFE2} to the $(\epsilon)$ line, we get, by Conjecture \ref{conj:LH},
	\begin{equation} \label{eq:454}
		\sum_{m,n}
		\frac{ e_p(A n \overline{m})}{m^{1-s-w} n^{s+w}}
		V_\mathfrak{a} \left( \frac{mn}{p} \right)
		\ll 
		p^\epsilon 
		(1 + |s+w| )^\epsilon,
		\quad
		(\aa = 0,1).
	\end{equation}
	With this, equation \eqref{eq:750d} becomes
	\begin{align} \label{eq:750e}
		&\sum_{n \le N}
		\frac{\tau_2(n) \Kl_2(an, p)}{n^s}
		=
		\sum_{n =1}^\infty
		\frac{\tau_2(n) \Kl_2(an, p)}{n^{s}}
		+ O \left( N^{-1/2 - \epsilon} p^{-1/2} p^{1+\epsilon} \right)
		\\& \quad
		+ O \left( 
		T^{-1} p^{1/2+\epsilon} + p^{-1+\epsilon}
		+ p^{-1} N^{-1/2 - \epsilon} T^{1/6}
		+ N^{-1/2 - \epsilon} T^{1/3}
		\right).
	\end{align}
	Taking
	\begin{equation} \label{eq:T}
		T = N^{1/2},
	\end{equation}
	we obtain, from \eqref{eq:750e},
	\begin{align} \label{eq:750f}
		&\sum_{n \le N}
		\frac{\tau_2(n) \Kl_2(an, p)}{n^s}
		=
		\sum_{n =1}^\infty
		\frac{\tau_2(n) \Kl_2(an, p)}{n^{s}}
		+ O \left( N^{-1/2} p^{1/2+\epsilon} \right),
	\end{align}
	on assuming Conjecture \ref{conj:LH} to bound \eqref{eq:454}. Applying \eqref{eq:750f} to $N=p$ and $N=N_1$, we obtain
	\begin{align} \label{eq:750g}
		&\sum_{N_1 \le n \le N}
		\frac{\tau_2(n) \Kl_2(an, p)}{n^s}
		\ll 
		N^{-1/2} p^{1/2+\epsilon}
		+ N_{1}^{-1/2} p^{1/2+\epsilon}.
	\end{align}
	This leads to \eqref{eq:750h}. \qed

\section{Proof of Theorem \ref{thm:beatsSelberg}}
Assume Conjecture \ref{conj:LH} for prime moduli $p$. This assumption is solely for the proof of Proposition \ref{prop:onLH} and the rest of the arguments in this section is unconditional.

By Theorem \ref{thm:1}, we may henceforth assume that
\begin{equation} \label{eq:knownlevellb}
	p \gg X^{2/3 - \delta}.
\end{equation}
We use this lower bound \eqref{eq:knownlevellb} to truncate the $n$ sum in \eqref{eq:645} a bit more. This is done for technical reasons to eliminate the residue \eqref{eq:751} at zero. Let $N_1 \ge 1$ be a parameter to be determined shortly. Shifting the line of integration in \eqref{eq:645c} left to the line $\sigma = \epsilon$, contribution from terms $n \le N_1$ in this quantity \eqref{eq:645c} is
\begin{equation} \label{eq:204}
	\ll 
	\left( \frac{X N_1}{p^2} \right)^{1 - \epsilon}
	p^{1/2} \log(N_1),
\end{equation}
which is $\ll p^{-1} X^{1-\delta}$ when we take, for instance,
\begin{equation} \label{eq:N1}
	N_1 
	=
	p^{1/2 - \delta}.
\end{equation}
Note that, by \eqref{eq:knownlevellb},
\begin{equation}
	N_1 \gg X^{1/3 - 2 \delta}.
\end{equation}
Hence, with this choice \eqref{eq:N1} for $N_1$, equation \eqref{eq:645} becomes
\begin{align} \label{eq:645d}
	\quad &C_{2,w}(X; p ,a)
	\\& \quad = 
	\frac{1}{2}
	\frac{p-2}{p-1}
	\frac{1}{2\pi i}
	\int\displaylimits_{(1+\epsilon)}
	\mathcal{M}[w](1-s) 
	\left(\frac{X}{p^2}\right)^{1-s}
	\left(
	\pi^{\frac{1}{2} - s}
	\frac{\Gamma\left(\frac{s}{2}\right)}{\Gamma\left(\frac{1-s}{2} \right)}
	\right)^2
	\sum_{\substack{p^{\frac{1}{2} - \delta} < n < p}}
	\frac{\tau_2(n) \Kl_2(an, p)}{n^s}
	ds
	\\ & \quad  \quad 
	+ O(p^{-1} X^{1-\delta}).
\end{align}
We treat the $n$ summation in the above using Proposition \ref{prop:onLH}, giving
\begin{equation}
	\sum_{\substack{p^{\frac{1}{2} - \delta} < n < p}}
	\frac{\tau_2(n) \Kl_2(an, p)}{n^s}
	\ll
	p^{1/2+\epsilon}
	\left( p^{1/2-\delta} \right)^{-1/2}
	\ll p^{\frac{1}{4} + \delta}.
\end{equation}
With this, the integral on the right side of \eqref{eq:645d} is bounded by
\begin{equation}
	O(p^{1/4 + 2 \delta})
\end{equation}
and this is $\ll p^{-1} X^{1-\delta}$ when
\begin{equation} \label{eq:712}
	p \ll X^{\frac{4}{5} - 2 \delta}.
\end{equation}
Thus, from \eqref{eq:645d},
\begin{equation}
	C_{2,w} \ll p^{-1} X^{1-\delta},
\end{equation}
for all $p$ satisfying \eqref{eq:712}.

Similarly, using the condition \eqref{eq:orthogonalityodd} for odd primitive characters in place of \eqref{eq:orthogonalityeven}, we can also show that
\begin{equation}
	L_{2,w}(X; p ,a)
	\ll p^{-1} X^{1-\delta},
\end{equation}
for $p$ satisfying \eqref{eq:712}. Thus, by \eqref{eq:916} and the above two estimates, we obtain
\begin{align}
	\Delta_{2, w}(X; p ,a)
	\ll p^{-1} X^{1-\delta}
\end{align}
uniformly for all $p$ satisfying \eqref{eq:712}, on assuming Conjecture \ref{conj:LH}. This leads to Theorem \ref{thm:beatsSelberg}.   \qed

\section{Heuristics for Conjecture \ref{conj:LH}} \label{sec:Heuristics}

In this last section, we give a heuristic argument based on a twisted second moment estimate which leads to Conjecture \ref{conj:LH}. It is best read with a physics hat on.

By adding an extra variable, we may move the critical strip right to the region of absolute convergence so we can interchange order of summations and evaluate term by terms. This is the basic strategy. As such, the extra variable $w$ in \eqref{eq:AFE2} does that, but for more flexibility, we separate variables in there, so that
\begin{equation}
	V_\mathfrak{a} \left( \frac{nm}{q} \right)
	\rightsquigarrow
	N_\mathfrak{a} \left( \frac{n}{q} \right)
	M_\mathfrak{a} \left( \frac{m}{q} \right),
\end{equation}
say, with $N_\mathfrak{a}$ and $M_\mathfrak{a}$ having similar meanings as {\eqref{eq:AFE2}} (there are some flexibility in choosing the quenching poles-remover factor $e^{w^2} \cos^2(\pi w)$). Doing this, \eqref{eq:AFE2} transforms into
\begin{equation} \label{eq:newbeefb}
	\sum_{\substack{n,m=1\\ (mn,q)=1}}^\infty
	\frac{e_q (A n \overline{m})}{n^s m^{1-s}}
	N_\mathfrak{a} \left( \frac{n}{q} \right)
	M_\mathfrak{a} \left( \frac{m}{q} \right), \quad
	(s=1/2 + it).
\end{equation}
Now move $n^{-w}$ in $N_\mathfrak{a}$ and $m^{-z}$ in $M_\mathfrak{a}$ right so that both $\Re(s+w)$ and $\Re(1-s+z)$ are larger than 1. Term by term, for each fixed $m$, a result of Estermann \cite{Estermann1930} says that
\begin{equation} \label{eq:1158}
	\sum_{n=1}^{\infty}
	\frac{e_q(A \overline{m} n)}{n^{s+w}}
	=
	G(s) q^{1-s-w}
	\left[ e^{\pi i (s+w)/2} \zeta(1-s-w; -A \overline{m}, q) - e^{-\pi i (s+w)/2} \zeta(1-s-w; A \overline{m}, q) \right],
\end{equation}
where
\begin{equation}
	\zeta(s; a, q) =
	\sum_{n \equiv a \pmod q}
	\frac{1}{n^s}, \quad (\sigma >1)
\end{equation}
and
\begin{equation}
	G(s) = - i (2 \pi)^{s-1} \Gamma(1-s).
\end{equation}
Note that, for $\Re(s+w)=1+\epsilon$, we have $q^{1-s-w} \ll q^{-\epsilon}$. Insert the expression \eqref{eq:1158} into \eqref{eq:newbeefb}, then move the $w$-line left so that $\Re(1-s-w) >1$. This gives
\begin{align}
	&G(s) q^{1-s-w}
	\left[ e^{\pi i (s+w)/2} 
	\sum_{(m,q)=1} \frac{1}{m^{1-s +z}}
	\zeta(1-s-w; -A \overline{m}, q) 
	\right.
	\\ & \left. \quad \quad
	- e^{-\pi i (s+w)/2} 
	\sum_{(m,q)=1} \frac{1}{m^{1-s+z}}
	\zeta(1-s-w; A \overline{m}, q) \right],
	\quad
	(\Re(1-s-w), \Re(1-s+z) >1).
\end{align}
Note that $\Re(1-s+z)$ is still larger than 1. Going to Dirichlet characters as in \eqref{eq:delta}, the first $m$ sum on the right side of the above is equal to
\begin{align}
	&\sum_{(m,q)=1} \frac{1}{m^{1-s+z}}
	\frac{1}{\varphi(q)}
	\sum_{\chi \pmod q}
	\chi(-A \overline{m}) 
	\sum_{n=1}^\infty
	\frac{\overline{\chi}(n)}{n^{1-s-w}}
	\\ & \quad
	=
	\frac{1}{\varphi(q)}
	\sum_{\chi \pmod q}
	\chi(-A)
	L(1-s+z, \overline{\chi})
	L(1-s-w, \overline{\chi}).
\end{align}
The twisted second moment on the right side of the above is
\begin{equation}
	\ll q^{\epsilon} (1 + |\Im(1+s+z+w)|)^\epsilon
\end{equation}
on the critical line. Thus, by the above bound, and ignoring all error terms, we have, for fixed imaginary parts on the critical line,
\begin{equation} \label{eq:625b}
	\sum_{\substack{n,m=1\\ (mn,q)=1}}^\infty
	\frac{e_q (A n \overline{m})}{n^s m^{1-s}}
	N_\mathfrak{a} \left( \frac{n}{q} \right)
	M_\mathfrak{a} \left( \frac{m}{q} \right)
	\ll
	q^{\epsilon}.
\end{equation}
This leads to Conjecture \ref{conj:LH}.
\hfill "Q.E.D."

\bibliographystyle{plain}

\end{document}